\documentclass{amsart}
\usepackage{amsmath, amssymb, amsthm}
\usepackage{mathtools}
\usepackage{verbatim}
\usepackage{mathrsfs}

\usepackage[usenames,dvipsnames]{xcolor}

\usepackage{graphicx}
\usepackage[utf8]{inputenc}

\usepackage{tikz}
\usetikzlibrary{calc}

\usepackage{hyperref}
\usepackage[capitalize]{cleveref}
\crefname{ineq}{Ineq.}{inequalities}
\creflabelformat{ineq}{#2{\upshape(#1)}#3} 

\usepackage[normalem]{ulem}
\usepackage{enumerate}
\usepackage{enumitem}
\usepackage{todonotes}

\hypersetup{
	colorlinks   = true, 
	urlcolor     = blue, 
	linkcolor    = blue, 
	citecolor   = blue 
}

\setcounter{tocdepth}{1}

\makeatletter
\newtheorem*{rep@theorem}{\rep@title}
\newcommand{\newreptheorem}[2]{%
	\newenvironment{rep#1}[1]{%
		\def\rep@title{#2 \ref{##1}}%
		\begin{rep@theorem}}%
		{\end{rep@theorem}}}
\makeatother

\newtheorem{theorem}{Theorem}[section]
\newreptheorem{theorem}{Theorem}
\newtheorem{lemma}[theorem]{Lemma}
\newtheorem{proposition}[theorem]{Proposition}
\newtheorem{corollary}[theorem]{Corollary}
\newreptheorem{corollary}{Corollary}

\newtheorem{definition}[theorem]{Definition}

\newtheorem{claim}{Claim}
\newtheorem*{claim*}{Claim}
\newtheorem{Conjecture}{Conjecture}
\newtheorem*{Conjecture*}{Conjecture}

\theoremstyle{definition}
\newtheorem{example}[theorem]{Example}

\newtheorem{remark}[theorem]{Remark}

\theoremstyle{remark}

\newtheorem*{Note}{\bf Note}

\numberwithin{equation}{section}

\newcommand{\la}{\lambda}

\newcommand{\T}{\mathrm{T}}

\renewcommand{\bar}{\overline}

\newcommand{\cout}[1]{}

\definecolor{darkcyan}{rgb}{0. 0.65, 0.65}

\newcommand{\eps}{\varepsilon}

\newtheorem{Structural Stability Theorem}[theorem]{Structural Stability Theorem}

\def\bt{\begin{theorem}}
	\def\et{\end{theorem}}
\def\bd{\begin{definition}}
	\def\ed{\end{definition}}
\def\bl{\begin{lemma}}
	\def\el{\end{lemma}}

\def\be#1\ee{\begin{align}\begin{split} #1 \end{split}\end{align}}
\def\beq#1\eeq{\begin{align*}\begin{split} #1 \end{split}\end{align*}}

\begin{document}
\title{Rigidity of compact rank one symmetric spaces}
\author{Chris Connell$^\dagger$, Mitul Islam$^{\ddagger}$, Thang Nguyen$^{\ddagger\dagger}$ and Ralf Spatzier$^{\ddagger\ddagger}$}

\address{Department of Mathematics,
Indiana University, Bloomington, IN 47405}
\email{connell@iu.edu}
\address{Max Planck Institute for Mathematics in the Sciences, Inselstr. 22, 04103 Leipzig}
\email{mitul.islam@mis.mpg.de}
\address{Department of Mathematics, Florida State University, 
    Tallahassee, FL, 32304}
\email{tqn22@fsu.edu}
\address{Department of Mathematics, University of Michigan, 
    Ann Arbor, MI, 48109.}
\email{spatzier@umich.edu}
\thanks{$^\dagger$ Supported in part by Simons Foundation grant \#965245}
\thanks{$^{\ddagger}$ Supported in part by DFG Emmy Noether project 427903332 and DFG  project 338644254 (SPP 2026).}
\thanks{$^{\ddagger\dagger}$ Supported in part by grant Simons Travel Support for Mathematicians MPS-TSM-00002547.}
\thanks{$^{\ddagger\ddagger}$ Supported in part by NSF grant DMS 2003712.}

\date{\today}

\begin{abstract}
We consider rigidity properties of compact symmetric spaces $X$ with metric $g_0$ of rank one.  Suppose $g$ is another Riemannian metric on $X$ with sectional curvature $\kappa$ bounded by $0 \leq \kappa \leq 1$. If $g$ equals $g_0$ outside a convex proper subset of $X$, then $g$ is isometric with $g_0$.  We also exhibit examples of surfaces showing that the nonnegativity of the curvature is needed. Our main result complements earlier results on other symmetric spaces by Gromov and Schroeder-Ziller.
\end{abstract}

\maketitle

\section{Introduction}

Riemannian symmetric spaces possess amazing rigidity properties.  For one incarnation, consider compact 
convex subsets $D$ of a globally symmetric space $X$.   Under innocuous curvature assumptions, the metric on such $D$ is often fully determined by the metric outside $D$.   This theme was first pursued by Gromov for nonpositively curved symmetric spaces of rank at least two in \cite{BGS1985}, assuming that the unknown metric is also non-positively curved.  Schroeder and Ziller proved a more general version for nonpositively curved manifolds, only assuming that the metric is locally symmetric of rank at least three on a suitable open set \cite{Schroeder_1990}.  This was improved by Schroeder and Strake under a rank two  locally symmetric assumption \cite{Schroeder_1989}.

For nonnegatively curved symmetric spaces, Schroeder and Ziller also found a similar result under the assumption that rank is at least three or that rank is two with an additional hypothesis on the size of the set $D$ \cite{Schroeder_1990}.  Furthermore, Schroeder and Ziller proved similar results for globally symmetric spaces assuming that both the symmetric and the unknown metric have lower curvature bound one. 

Naturally, the question arises of what happens if we instead impose an upper curvature bound of one. When the sectional curvature $\kappa$ is trapped between $0 \leq \kappa \leq 1$ we provide the following positive answer in \cref{thm:main}.

Before stating the answer, we specify our definition of convexity to avoid confusion (as there are many distinct notions of convexity, even strict convexity,  in positive curvature).
A subset $D\subset M$ of a complete geodesic metric space $M$ is {\em s-convex} if every minimizing geodesic segment in $M$ joining $x,y\in D$ also belongs to $D$.

\begin{theorem}\label{thm:main}
    Let $(X, g_0)$ be a connected, simply connected, globally symmetric space of rank one with maximal curvature $1$.  Let $D$ be an s-convex closed subset of $(X,g_0)$. If $g_1$ is another smooth  Riemannian metric on $X$ with  $g_1|_{D^c}=g_0|_{D^c}$ and if $g_1$ has sectional curvatures $\kappa$ bounded by $0 \leq \kappa \leq 1$, then $g_1$ and $g_0$ are isometric. 
\end{theorem}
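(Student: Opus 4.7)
My plan is to exploit three pieces of structure simultaneously. First, $(X,g_0)$ is a compact rank-one symmetric space with $\kappa_{\max}=1$, so its conjugate radius equals $\pi$ and every $g_0$-geodesic is closed with a common period $L_0\in\{\pi,2\pi\}$. Second, the upper bound $\kappa(g_1)\le 1$, combined with Rauch's comparison theorem, forces the conjugate radius of $(X,g_1)$ to be at least $\pi$; hence every $g_1$-geodesic of length less than $\pi$ is free of conjugate points and locally length-minimizing. Third, the hypothesis $g_1=g_0$ on $D^c$ together with s-convexity of $D$ means that every minimizing $g_0$-chord between two points of $D$ lies entirely in $D$, supplying a reference $g_0$-geodesic inside $D$ to compare against the unknown $g_1$-geodesic. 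The lower bound $\kappa(g_1)\ge 0$ supplies Bishop volume control, and $g_1=g_0$ on $D^c$ gives the identity $\Vol(X,g_1)-\Vol(X,g_0)=\Vol(D,g_1)-\Vol(D,g_0)$.

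The heart of the argument is a geodesic propagation step aimed at showing that every $g_1$-geodesic is closed of period $L_0$, i.e.\ that $(X,g_1)$ is a Blaschke manifold. A $g_1$-geodesic $\gamma$ alternates \emph{excursions} in $D^c$, on which it agrees with a $g_0$-geodesic arc and hence sits on a closed $g_0$-orbit of length $L_0$, with \emph{transits} through $D$, on which it is an a priori unknown $g_1$-geodesic arc. I would prove that each transit exits $D$ along the same closed $g_0$-orbit that it entered along. For this I would set up a Jacobi-field/hinge comparison inside $D$: the two-sided pinch $0\le \kappa(g_1)\le 1$ sandwiches Jacobi-field growth, the $\pi$ conjugate-radius bound limits how the velocity can rotate before $\gamma$ returns to $\partial D$, and the s-convex $g_0$-chord joining the entry and exit points of $\gamma$ on $\partial D$ serves as the comparison reference. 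Iterating over all transits forces $\gamma(L_0)=\gamma(0)$ with matching velocities, so $(X,g_1)$ is Blaschke.

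To close, I would invoke a Berger--Kazdan type volume inequality for Blaschke metrics on $X$: for a smooth metric whose geodesic flow is periodic of period $L_0$, one has $\Vol(X,g_1)\ge \Vol(X,g_0)$ with equality if and only if $g_1$ is isometric to $g_0$. Combined with the volume identity from the first paragraph and Bishop/Günther comparison on $D$ coming from the pinch $0\le\kappa(g_1)\le 1$, these volumes are pinned equal, triggering the equality case and yielding the isometry.

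The main obstacle will be the propagation step. Inside $D$ the metric $g_1$ is essentially unconstrained beyond the curvature pinch, and one must rule out that a transit of a $g_1$-geodesic emerges onto a different closed $g_0$-orbit than the one it entered. The delicacy is compounded by the fact that $\partial D$ need not be smooth, so the hinge comparison must be carried out at singular boundary points as well. Executing this cleanly, in tandem with the periodicity of the $g_0$-geodesic flow and both sides of the curvature pinch, is where most of the real geometric work is likely to lie.
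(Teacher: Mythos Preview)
Your outline has two genuine gaps, and the paper follows a different route that sidesteps both.

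\textbf{The propagation step.} You want to show that a $g_1$-geodesic which enters $D$ along a closed $g_0$-orbit $\sigma$ must exit $D$ along the \emph{same} $\sigma$, and moreover with the correct time-shift (i.e.\ the $g_1$-length of the transit matches the $g_0$-arclength of the chord along $\sigma$). But this pair of statements is precisely the \emph{lens (scattering) rigidity} of $(D,g_1)$ relative to $(D,g_0)$: it says the boundary scattering relation of $g_1$ equals that of $g_0$. A two-sided pinch $0\le\kappa\le 1$ together with the $\pi$ conjugate-radius bound does not give you this; Jacobi estimates only control how Jacobi fields grow along a geodesic, not where that geodesic lands on $\partial D$. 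The paper in fact mentions (in a Note after the statement) that one \emph{can} invoke boundary rigidity \`a la Stefanov--Uhlmann--Vasy as an alternative, but that requires $\partial D$ to be smooth with strictly positive second fundamental form, which your s-convex hypothesis does not guarantee. Without such an input, the propagation step is not a proof but a restatement of a hard theorem.

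\textbf{The volume endgame.} Even granting that $(X,g_1)$ is Blaschke, Berger--Kazdan gives $\Vol(X,g_1)\ge\Vol(X,g_0)$, so you still need $\Vol(D,g_1)\le\Vol(D,g_0)$. Bishop ($\kappa\ge 0$) and G\"unther ($\kappa\le 1$) compare $g_1$-balls to balls in the Euclidean and round-sphere model spaces, not to the $g_0$-volume of the same set $D$; since $g_0$ itself has varying curvature $\tfrac14\le\kappa\le 1$ (except on $S^n$), neither comparison yields $\Vol(D,g_1)\le\Vol(D,g_0)$. This inequality is essentially another incarnation of boundary rigidity and does not come for free.

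\textbf{What the paper does instead.} Rather than tracking individual geodesics through $D$, the paper builds two-dimensional objects. Using $0\le\kappa\le 1$ it first shows $\mathrm{injrad}(X,g_1)\ge\pi$, so $(X,g_1)$ is CAT(1). Then, for any $g_1$-geodesic $\gamma$, it finds a totally geodesic $g_0$-sphere $S$ of curvature $1$ tangent to an initial piece of $\gamma$ lying in $D^c$; since $D\cap S$ sits in an open hemisphere, one can enclose it by a geodesic triangle $\Delta\subset S\setminus D$. The CAT(1) \emph{rigid} triangle comparison fills $f(\Delta)$ by a $g_1$-totally geodesic piece isometric to a spherical triangle, and gluing gives a $g_1$-totally geodesic $2$-sphere of curvature $1$ containing $\gamma$. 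Thus every $g_1$-geodesic has a conjugate point at distance exactly $\pi$ (spherical rank $\ge 1$), and the spherical rank rigidity theorem of Shankar--Spatzier--Wilking finishes. The key difference is that the paper never needs to know what a single $g_1$-geodesic does inside $D$; the rigidity of the CAT(1) triangle filling rebuilds the whole $2$-sphere at once.
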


In particular, in the case of the sphere $X=S^n$, any metric that has curvature identically $1$ on any open neighborhood of a hemisphere must either be the round metric or have sectional curvatures outside the interval $[0,1]$ somewhere. 

 In their proof, Schroeder and Ziller rely on either the Rauch comparison theorem or the Toponogov comparison theorem (depending on the curvature bounds), and especially their rigidity properties.  
 For their result on metrics with sectional curvatures $\kappa \geq 1$, they critically use the existence of minimal curvature totally geodesic subspaces of large dimension. This argument fails for the upper curvature bound 1, since maximal curvature subspaces have small dimensions; indeed only dimension two for complex projective spaces. Instead, we realize that  $(X,g)$ has spherical rank at least one, i.e. that the first conjugate point along any geodesic is exactly at time $\pi$.  Then the result follows thanks to the spherical rank rigidity theorem of Shankar, Spatzier and Wilking \cite{SSW}.

\begin{Note}
We note that we don't need to use the full force of the spherical rank rigidity theorem.  We can just use 2-dimensional totally geodesic subspaces to show $(X,g)$ is a Blaschke manifold with injectivity radius $\pi$ and, by assumption,  $\kappa \leq 1$.  By a special case of the Blaschke conjecture \cite{RT} or \cite[Proposition 2.1]{SSW}, $(X,g)$ then is a compact rank one symmetric space.  Using spherical rank rigidity however, seems easier conceptually.

Furthermore, in lieu of spherical rank rigidity or the Blaschke conjecture, we can use the boundary rigidity results of \cite[Corollary 1.2]{StefanovUhlmannVasy21} if the set $D$ is convex in the sense that $\partial D$ is smooth and has positive second fundamental form. We note that their assumption of simplicity for the manifold follows from our \cref{{prop:hemisphere1}}.

\end{Note}

We will also prove a general version of \cref{thm:main} for CAT(1) metrics, see \cref{thm:extension_of_isom_inside_D}. Furthermore, from our considerations in \cref{sec:CAT_1} and \ref{sec:proof_of_main_thm}, we obtain the following:

\begin{corollary} \label{cor:shortclosed}
    Suppose $(X, g_0)$ is a connected, simply connected, globally symmetric space of rank one with maximal curvature $1$ and $D$ is an s-convex closed subset of $(X,g_0)$. If $g_1$ is a smooth Riemannian metric on $X$ whose sectional curvature $\kappa \leq 1$ and $g_1|_{D^c}=g_0|_{D^c}$, then the shortest closed geodesic in $(X,g_1)$ has length strictly less than $2\pi$ and $g_1$ has negative sectional curvature at some point in $D$. 
\end{corollary}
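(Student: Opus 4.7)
The plan is to prove both conclusions by contradiction, in each case reducing to the hypotheses of \cref{thm:main}. Implicit in the corollary is the nontriviality assumption $g_1 \not\equiv g_0$ (otherwise, on the round sphere, both conclusions fail trivially when $g_1 = g_0$), and the contradiction at the end of each case will come from forcing $g_1 = g_0$ pointwise. The main technical input is the Klingenberg $+$ Blaschke $+$ CROSS-rigidity chain used in the second case, which reuses the machinery of \cref{thm:main} and of the Note; the analyticity-based upgrade from isometry to pointwise equality is the final rigidity ingredient common to both arguments.

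For the negative-curvature claim, suppose to the contrary that $g_1$ has $\kappa \geq 0$ at every point of $D$. Since $g_1 = g_0$ on $D^c$ and the CROSS metric $g_0$ satisfies $0 \leq \kappa \leq 1$ there, combining with the hypothesis $\kappa(g_1) \leq 1$ yields $0 \leq \kappa(g_1) \leq 1$ globally on $X$. Then \cref{thm:main} gives that $g_1$ is isometric to $g_0$, so $(X,g_1)$ is itself a (real-analytic) CROSS of maximum curvature $1$. Two real-analytic metrics agreeing on the open set $D^c$ must agree on all of $X$, so $g_1 = g_0$, contradicting nontriviality.

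For the short-closed-geodesic claim, suppose instead that the shortest closed geodesic in $(X,g_1)$ has length $\geq 2\pi$. The Rauch comparison theorem applied to $\kappa \leq 1$ gives conjugate radius at least $\pi$, and Klingenberg's lemma (the injectivity radius is the minimum of the conjugate radius and half the length of the shortest closed geodesic) then yields injectivity radius at least $\pi$. Following the argument of the Note---using $2$-dimensional totally geodesic subspaces to identify $(X,g_1)$ as a Blaschke manifold with injectivity radius $\pi$, then invoking the resolved Blaschke conjecture for CROSS topologies (or directly the spherical rank rigidity theorem of \cite{SSW})---we conclude $(X,g_1)$ is itself a CROSS of maximum curvature $1$. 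In particular $0 \leq \kappa \leq 1$ globally, and the argument of the previous paragraph again forces $g_1 = g_0$, the desired contradiction. The delicate step here is the passage from inj\-radius $\geq \pi$ and $\kappa \leq 1$ to the Blaschke/CROSS structure, but this is exactly the content already established en route to \cref{thm:main}.
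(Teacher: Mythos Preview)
Your overall strategy matches the paper's: for the short-geodesic claim, assume length $\geq 2\pi$, deduce via Rauch and Klingenberg (which is exactly \cref{claim:closed_geod_length_2_times_injrad} in \cref{sec:proof_of_main_thm}) that the injectivity radius is at least $\pi$, hence $(X,g_1)$ is CAT(1), and then invoke \cref{thm:extension_of_isom_inside_D} to get $(X,g_1)$ isometric to $(X,g_0)$; for the negative-curvature claim, assume $\kappa \geq 0$ and apply \cref{thm:main} directly to reach the same conclusion. This is essentially what the paper does.

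The genuine gap is your ``analyticity upgrade'' from \emph{isometric} to \emph{pointwise equal}. Knowing $(X,g_1)$ is isometric to a CROSS only tells you $g_1$ is real-analytic in \emph{some} compatible analytic structure on $X$, not in the one determined by $g_0$; unique continuation requires both tensors to be analytic in the \emph{same} atlas. Concretely, pick any smooth, non-analytic diffeomorphism $\psi:X\to X$ with $\psi|_{D^c}=\id$ but $\psi\neq\id$ on the interior of $D$, and set $g_1:=\psi^*g_0$. Then $g_1|_{D^c}=g_0|_{D^c}$, the sectional curvatures of $g_1$ coincide with those of $g_0$ (so $0\leq\kappa\leq 1$), and $g_1$ is isometric to $g_0$ via $\psi$, yet $g_1\neq g_0$; your analyticity step would falsely force $g_1=g_0$ here. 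This example also shows the corollary is literally false under the nontriviality hypothesis ``$g_1\not\equiv g_0$'' that you adopt; the operative implicit hypothesis has to be ``$g_1$ is not isometric to $g_0$,'' which is what the paper's one-line ``contradiction to \cref{thm:extension_of_isom_inside_D}'' is tacitly using. With that reading your argument terminates one step earlier---once you reach ``$(X,g_1)$ isometric to $(X,g_0)$'' you already have the contradiction---and the analyticity paragraph should simply be dropped.
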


 Indeed, by our results in \cref{sec:proof_of_main_thm},  if the shortest closed geodesic in $g_1$ has length $\geq 2\pi$, then $g_1$ is a CAT(1) metric in contradiction to our \cref{thm:extension_of_isom_inside_D}.

We further prove that the curvature assumption $0 \leq \kappa \leq 1$ is actually needed, at least for surfaces.

\begin{theorem}[See \cref{eg:metric_on_surf_rev}]
\label{thm:surface_example}
Let $(S^2, g_0)$ be the 2-sphere with the round metric of sectional curvature $1$.  Then there is an s-convex subset $D$  of $S^2$ and a $C^{\infty}$ metric $g$ on $S^2$ which agrees with $g_0$ outside of $D$ and has sectional curvature bounded above by one but is not isometric to $g_0$. 
\end{theorem}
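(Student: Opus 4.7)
The plan is to construct an explicit surface of revolution as the counterexample. I would take $D$ to be a closed geodesic disk of radius $r_0 < \pi/2$ in $(S^2, g_0)$, which is s-convex since any minimizing geodesic between two of its points lies in $D$ (standard for spherical caps of radius less than $\pi/2$). The new metric on $S^2$ will have the form $g = ds^2 + \tilde f(s)^2\, d\theta^2$ for $s \in [0, L]$, where $L = s_0 + \pi - r_0$ and $s_0 > \pi - r_0$ is a parameter to be fixed later. The complement $D^c$ is identified with $\{s \in [s_0, L]\}$ via $r = s - s_0 + r_0$, so setting $\tilde f(s) = \sin(s - s_0 + r_0)$ on $[s_0, L]$ gives $g = g_0$ there. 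The key point of the setup is that in the metric $g$ the cap $D$ has radial extent $s_0 > r_0$; the modification ``lengthens'' $D$, which enables the construction to avoid a rigidity obstruction (a simple integration-by-parts with $\sin s$ against $\tilde f'' + \tilde f$ shows that any compactly supported modification of $\sin r$ preserving $K \le 1$ must be trivial).

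For $s \in [0, s_0]$ I would set $\tilde f(s) = \sin s + I(s)$, where $I(s) := \int_0^s \sin(s-t)\, p(t)\, dt$ for a smooth non-negative function $p$ compactly supported in $(0, s_0)$. This ensures $\tilde f'' + \tilde f = p \ge 0$, which is equivalent to $K_g \le 1$, and $\tilde f > 0$ on $(0, s_0)$ (since $\sin s > 0$ there and $I \ge 0$). Smoothness at the pole $s = 0$ is automatic: $I$ is flat at $0$, so $\tilde f$ agrees with $\sin s$ (analytic and odd at $0$) in a neighborhood of $0$. Smoothness across $s = s_0$ requires $\tilde f$ and all its derivatives to agree with $\sin(s - s_0 + r_0)$ at $s_0$; matching values and first derivatives translates into the two moment conditions
\[
\int_0^{s_0} p(t)\sin(s_0 - t)\, dt = \sin r_0 - \sin s_0, \qquad \int_0^{s_0} p(t)\cos(s_0 - t)\, dt = \cos r_0 - \cos s_0,
\]
while the higher-order derivatives match automatically because $p$ is flat at $s_0$.

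The main obstacle is exhibiting a non-negative $p$ satisfying both moment conditions, which is where the choice $s_0 \in (\pi - r_0, \pi)$ matters. For such $s_0$ both right-hand sides are strictly positive, and the weight functions $\sin(s_0 - t)$ and $\cos(s_0 - t)$ are simultaneously positive on $(s_0 - \pi/2, s_0) \subset (0, s_0)$. I would take $p$ as a non-negative combination of two smooth bump functions supported at distinct points in this subinterval, reducing the problem to inverting a $2 \times 2$ linear system with positive right-hand side, which admits a strictly positive solution for appropriately chosen bumps. With this $\tilde f$ in hand, the metric $g$ is $C^\infty$ on $S^2$, agrees with $g_0$ on $D^c$, satisfies $K_g \le 1$ everywhere, and has $K_g < 1$ on the support of $p$; by Gauss--Bonnet, $\mathrm{Area}(S^2, g) > 4\pi = \mathrm{Area}(S^2, g_0)$, so $g$ is not isometric to $g_0$.
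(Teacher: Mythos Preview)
Your construction is correct and is a genuinely different (and somewhat cleaner) route than the paper's. Both arguments build a rotationally symmetric metric $ds^2+\tilde f(s)^2\,d\theta^2$ on $S^2$ that agrees with the round metric on a large cap, but the mechanisms differ. The paper writes the profile as $f(t)=\cos t+\varepsilon(t)$ and imposes the \emph{sufficient} conditions $\varepsilon\ge 0$, $\varepsilon''\ge 0$ to force $K=-f''/f\le 1$; it then needs $\varepsilon$ to vanish to infinite order at one end and to match $-\cos$ to infinite order at $t=\pi/2$, achieves this by an explicit quartic plus a carefully normalized convolution, and finally reflects across $t=\pi/2$ to close up the surface into a symmetric ``barbell''. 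Your approach instead builds the perturbation via Duhamel's formula $I(s)=\int_0^s\sin(s-t)p(t)\,dt$, so that $\tilde f''+\tilde f=p\ge 0$ encodes $K\le 1$ \emph{exactly}; the $C^\infty$ gluing at $s_0$ then reduces to two linear moment conditions on $p$, and your observation that for $s_0\in(\pi-r_0,\pi)$ the target vector $(\sin r_0-\sin s_0,\cos r_0-\cos s_0)$ lies in the open cone generated by $\{(\sin u,\cos u):u\in(0,\pi/2)\}$ is exactly what makes a nonnegative solution available. (In fact a single translated and scaled bump centered at $t_0=s_0-\arctan\!\big(\frac{\sin r_0-\sin s_0}{\cos r_0-\cos s_0}\big)$ already suffices; this slightly sharpens your two-bump reduction.) Your integration-by-parts remark explaining why a compactly supported perturbation cannot work, and the Gauss--Bonnet argument for non-isometry, are both correct and make the logic more transparent than in the paper, which leaves the choice of the s-convex set $D$ and the non-isometry verification implicit. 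The paper's construction has the minor bonus of being realized as an actual surface of revolution in $\mathbb R^3$ (hence the arclength constraint $|f'|\le 1$ they check), whereas yours is an abstract warped product; for the theorem as stated that distinction is irrelevant.
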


Our method for constructing these examples does not extend to higher dimensions.  Thus let us ask if \cref{thm:main} might hold with just an upper curvature bound in higher dimensions.  Note our conclusion in \cref{cor:shortclosed} about short closed geodesics as well as our \cref{rem:minmal} about small area minimal spheres.

One may further wonder if similar results hold for curvature bounds $-1 \leq \kappa \leq 0$.  For real hyperbolic space, one can adapt the arguments from Schroeder-Ziller. However, this fails for the other symmetric spaces (since the totally geodesic subspaces of maximal curvature do not intersect in a geodesic anymore). Still, we finish with 

\begin{Conjecture}
    Let $(X, g_0)$ be a globally symmetric space of rank one with minimal curvature $-1$, and $D$ a compact subset.  Then any Riemannian metric $g$ which coincides with $g_0$ outside of $D$ is isometric to $g_0$.
\end{Conjecture}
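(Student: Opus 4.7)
The plan is to mimic the architecture of \cref{thm:main}, replacing the upper-curvature-bound techniques there with tools adapted to nonpositive curvature. The starting observation is that because $D$ is compact and $(X,g_0)$ is noncompact, $g$ and $g_0$ agree on an open neighborhood of infinity; hence $g$ is complete, every $g$-geodesic that enters $D$ exits through $\partial D$ and thereafter coincides with a $g_0$-geodesic, and the visual boundary of $(X,g)$ is canonically identified with $\partial_\infty X$. In particular, the lens/scattering data of $(D,g)$ on $\partial D$ match those of $(D,g_0)$: entry and exit points and times coincide.

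For the real hyperbolic case $X=H^n_\R$, I would adapt Schroeder--Ziller as the introduction suggests. Every $g_0$-totally geodesic $2$-plane meeting $D$ remains totally geodesic for $g$ in $D^c$; since $(X,g_0)$ has constant curvature $-1$, a Rauch comparison and chord-length argument should force each such plane to close up isometrically inside $D$, so that $g|_D$ carries a family of curvature $-1$ hyperbolic planes covering $D$, which pins $g=g_0$ on $D$. Alternatively, enlarging $D$ slightly to have smooth strictly convex boundary in $g_0$ and appealing directly to the lens rigidity theorem of \cite{StefanovUhlmannVasy21}, as in the Note following \cref{thm:main}, gives the same conclusion.

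For $H^n_\C$, $H^n_{\KH}$ and $H^2_{\KO}$ the maximal-curvature totally geodesic $2$-planes no longer intersect in higher-dimensional totally geodesic subspaces, so Schroeder--Ziller genuinely fails. The natural replacement is hyperbolic rank rigidity: $(X,g_0)$ has hyperbolic rank at least one, meaning that along every geodesic there is a parallel normal field spanning a $2$-plane of constant sectional curvature $-1$ with the tangent direction. I would propagate these rank-one witnesses along the many geodesics of $g_0$ that traverse $D$, using the matching scattering data on $\partial D$ to control the parallel transport inside, and then invoke the hyperbolic rank rigidity theorems in the style of Hamenst\"adt, Connell, and Constantine to conclude that $(X,g)$ is locally symmetric of rank one with the same curvature normalization, hence globally isometric to $(X,g_0)$.

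The main obstacle is the complete absence of any a priori curvature bound on $g$ in the statement of the conjecture. Essentially every argument above --- ruling out conjugate points inside $D$, invoking Rauch or Toponogov, or even globally defining hyperbolic rank --- traditionally requires at least $\kappa(g)\leq 0$, and most demand $-1\leq \kappa(g)\leq 0$. Extracting such bounds from the mere coincidence of $g$ and $g_0$ off $D$ seems to require a genuinely new idea: perhaps an analysis of how horoballs of $g_0$ deform as they enter $D$, a Busemann-convexity argument, or a min--max statement about minimal spheres in the spirit of \cref{rem:minmal}. I expect this step --- ruling out wild curvature inside $D$ purely from the external symmetric structure --- to be the essential difficulty, and the most plausible reason the statement is left as a conjecture.
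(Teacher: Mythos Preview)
The statement you are addressing is a \emph{conjecture}: the paper offers no proof of it and does not claim one. The only content in the paper beyond the bare statement is the remark immediately following it, namely that the conjecture holds when $\diam(D)<\tanh^{-1}(\sqrt{2}/2)$, via Toponogov together with the boundary rigidity result \cite[Corollary~1.2]{StefanovUhlmannVasy21}. So there is no ``paper's own proof'' to compare against, and you correctly diagnose in your final paragraph that the absence of any curvature hypothesis on $g$ is precisely why the authors leave it open.

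That said, your proposal is a research sketch rather than a proof, and the gap you identify is not merely the hardest step but the step on which every part of your plan silently depends. Your real-hyperbolic argument via Schroeder--Ziller or via \cite{StefanovUhlmannVasy21} needs, at minimum, that $g$-geodesics through $D$ have no conjugate points (for the comparison/filling argument) or that $(D,g)$ satisfies the convex-foliation hypotheses of \cite{StefanovUhlmannVasy21}; neither follows from $g=g_0$ on $D^c$ alone. Likewise, your hyperbolic-rank strategy for the non-real cases presupposes that one can even speak of parallel curvature~$-1$ planes along $g$-geodesics inside $D$, which again requires a curvature bound you do not have. Finally, the assertion that ``entry and exit points and times coincide'' for $g$ and $g_0$ on $\partial D$ is exactly the boundary/lens rigidity input, not a consequence of the hypotheses: without a curvature bound, a $g$-geodesic entering $D$ could in principle close up or have arbitrary travel time before exiting. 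In short, your outline is a reasonable wish list of what one would do \emph{given} $-1\le\kappa(g)\le 0$, but it does not touch the actual content of the conjecture, and the paper does not either.
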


We note that if the diameter of $D$ is smaller than $\tanh^{-1}(\frac{\sqrt{2}}{2})$ then the conjecture holds by Toponogov and boundary rigidity theorem \cite[Corollary 1.2]{StefanovUhlmannVasy21}.  Indeed, such sets can be covered by triangles which fill in rigidly by Toponogov and the lower bound on curvature.

 \medskip

\noindent {\sl Acknowledgements:} We are grateful to K. Shankar for conversations about this project. CC, MI, and TN thank the Max Planck Institute for Mathematics in Bonn. MI and TN thank the University of Michigan, Institut Henri Poincar\'e, and LabEx CARMIN (ANR-10-LABX-59-01) for their support and hospitality. Further, MI thanks Universit\"{a}t Heidelberg while TN thanks Institut des Hautes \'Etudes Scientifiques.  

\section{Preliminaries}

\subsection{Definitions of convexity}

As there are many notions of convexity in Riemannian geometry, let us make precise definitions for the ones we will use. The first definition below can be interpreted as `weak' notion of convexity while the second one can be interpreted as `strong' notion of convexity.   

\begin{definition}
    A subset $D$ in a Riemannian manifold $(X,g)$ is called  {\em w-convex} if for every pair of points $x$ and $y$ in $D$, there exists a minimizing geodesic segment from $x$ to $y$ belonging to $D$.
\end{definition}

\begin{definition}
    A subset $D$ in a Riemannian manifold $(X,g)$ is called {\em s-convex} if for every pair of points $x$ and $y$ in $D$, every minimizing geodesic segment from $x$ to $y$ belongs to $D$.
\end{definition}

In addition, we will use the following notion of local convexity from Cheeger-Gromoll \cite{Cheeger_1972}. First we recall that for a Riemannian manifold $(X,g)$, the \emph{convexity radius} function is a function $r:X \to (0,\infty]$ such that, $r(p):=\sup\{s>0: $ any two points $q,q' \in B_s(p)$  are joined by a unique minimizing geodesic segment and the segment lies in $B_s(p) \}$.
\begin{definition}
    Let $(X,g)$ be a Riemannian manifold and let $r:M\to [0,\infty)$ be its convexity radius function. We say that a subset $D$ of $X$ is {\em  l-convex} if for every $p\in \bar D$, there exists $0<\eps(p)<r(p)$ such that $D\cap B_{\eps(p)}(p)$ has the following property: for every $x$ and $y$ in $D\cap B_{\eps(p)}(p)$, there is a unique minimizing geodesic segment between them and the segment is contained in $D\cap B_{\eps(p)}(p)$.
\end{definition}

\begin{lemma}  \label{lem:s->l convex}
Any $s$-convex set $D$ of a Riemannian manifold $(X,g)$ is $l$-convex.
\end{lemma}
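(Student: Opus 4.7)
The plan is to unpack the definitions and verify the $l$-convexity condition directly, combining the convexity radius property with the $s$-convexity hypothesis. The argument should be short: the two hypotheses fit together almost immediately, so the main task is just to pick $\eps(p)$ appropriately.

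First I would fix $p \in \bar{D}$ and set $\eps(p) := r(p)/2$. Since $(X,g)$ is Riemannian and the convexity radius is positive on $X$ (a standard fact built from positivity of the injectivity radius together with the existence of strongly convex geodesic balls around every point), we have $0 < \eps(p) < r(p)$, as required by the definition of $l$-convex.

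Next, I would invoke the defining property of $r(p)$ to conclude that $B_{\eps(p)}(p)$ is strongly convex: any two points $q, q' \in B_{\eps(p)}(p)$ are joined by a unique minimizing geodesic segment $\gamma$ that lies inside $B_{\eps(p)}(p)$. The statement in the excerpt defines $r(p)$ as a supremum, so strictly speaking one must remark (in passing) that the strong convexity property descends to all smaller radii; this is the standard monotonicity and follows from the fact that $d(p,\cdot)$ is convex along minimizing geodesics inside a strongly convex ball, so if $q,q' \in B_{\eps(p)}(p)$ the entire joining geodesic stays in $B_{\eps(p)}(p)$.

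Finally, suppose $x, y \in D \cap B_{\eps(p)}(p)$. The previous step gives a unique minimizing geodesic $\gamma$ from $x$ to $y$, lying in $B_{\eps(p)}(p)$. Since $x, y \in D$ and $D$ is $s$-convex, every minimizing geodesic segment in $X$ joining $x$ and $y$ is contained in $D$; in particular $\gamma \subset D$. Combining, $\gamma \subset D \cap B_{\eps(p)}(p)$, which is precisely the $l$-convexity condition at $p$. The only obstacle worth flagging is the small point about the strong convexity property descending to smaller radii, and this is entirely standard; everything else is an immediate consequence of the definitions.
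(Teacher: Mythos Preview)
Your proof is correct and follows essentially the same approach as the paper: choose $\eps(p)$ strictly below the convexity radius (the paper takes $0.9\,r(p)$, you take $r(p)/2$), use the definition of $r(p)$ to get a unique minimizing geodesic in $B_{\eps(p)}(p)$, and then apply $s$-convexity of $D$ to conclude this geodesic also lies in $D$. Your additional remark about strong convexity descending to smaller radii is a harmless clarification that the paper leaves implicit.
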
 

\begin{proof}
      Indeed, for any $p\in D$, set $\varepsilon(p):=0.9 r(p)$  where $r$ is the convexity radius function. Then, for any $x,y \in D \cap B_{\varepsilon(p)}(p)$, there is a unique minimizing $g_1$-geodesic segment joining $x$ and $y$ in $B_{\varepsilon(p)}(p)$ (since $B_{\varepsilon(p)}(p)$ is strongly convex, by definition of the convexity radius function). Since $D$ is s-convex for $g_1$, this minimizing $g_1$-geodesic also lies in $D$. 
\end{proof}

\subsection{Convex subsets of the positively curved globally symmetric spaces}

For the remainder of this section, consider a globally symmetric space $(X, g_0)$ with positive sectional curvature. We will prove some lemmas about the structure of convex sets.

\begin{lemma} \label{lem-baby}
    Let $D$ be a closed w-convex proper subset of 2-sphere $S^2$ with constant curvature $1$. Then $D$ is contained in a
closed hemisphere.
\end{lemma}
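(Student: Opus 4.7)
The plan is to argue by contradiction. Suppose $D$ is not contained in any closed hemisphere; equivalently, the open set $U := S^2\setminus D$ contains no open hemisphere $B(p,\pi/2)$. I want to produce a large open ball inside $U$ by maximizing the distance-to-$D$ function, and then use w-convexity to force that ball to be too small.

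First I would consider the function $f(p) := d(p,D)$ on $S^2$. Since $D$ is closed and $S^2$ is compact, $f$ is continuous and attains its maximum at some $p^* \in S^2$; set $r := f(p^*)$. The hypothesis that no open hemisphere lies in $U$ translates exactly to $r < \pi/2$. My goal is then to exhibit a point of $D$ inside the forbidden open ball $B(p^*,r)$, producing the contradiction.

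Next I would examine the \emph{touching set} $T := D\cap \partial B(p^*,r)$. A standard maximum-point analysis forces $|T|\geq 2$: if $T=\{q_0\}$ were a singleton, then perturbing $p^*$ along the geodesic directly away from $q_0$ would strictly increase $f$, contradicting maximality. (More invariantly, the one-sided directional derivative of $f$ at the maximum is $\leq 0$ in every direction, which forces the tangent directions at $p^*$ pointing toward the touching points to have convex hull containing the origin in $T_{p^*}S^2$; in particular $|T|\geq 2$.) So I pick two distinct $q_1,q_2 \in T$. Since $d(q_1,q_2)\leq 2r<\pi$, they are not antipodal, so there is a unique minimizing geodesic $\gamma$ from $q_1$ to $q_2$, and w-convexity of $D$ forces $\gamma \subset D$.

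Finally, I would derive the contradiction from the spherical convex geometry of small balls. Because $r<\pi/2$ is strictly less than the convexity radius $\pi/2$ of $S^2$, the closed ball $\bar B(p^*,r)$ is geodesically convex, so $\gamma\subset \bar B(p^*,r)$. However $\partial B(p^*,r)$ is a (non-geodesic) small circle, so any positive-length geodesic arc with both endpoints on $\partial B(p^*,r)$ must enter the open interior $B(p^*,r)$; in the ``diametrically opposite'' case the interior point is simply $p^*$, otherwise it is an interior point of the geodesic chord. Either way, some point of $\gamma \subset D$ lies in $B(p^*,r)\subset U$, contradicting $B(p^*,r)\cap D=\emptyset$. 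Thus $r\geq \pi/2$, and $D$ sits in the closed hemisphere $\{x\in S^2 : d(x,p^*)\geq \pi/2\}$. I expect the subtlest step to be the touching-set inequality $|T|\geq 2$, since it requires a first-order analysis of the nonsmooth function $f$; the other ingredients (convexity radius $\pi/2$, small circles are not geodesics, uniqueness of short minimizing geodesics) are standard spherical geometry and combine cleanly with w-convexity.
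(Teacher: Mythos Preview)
Your argument is correct and follows the same overall strategy as the paper: maximize the distance function $d(\,\cdot\,,D)$, assume the maximum value $r$ is below $\pi/2$, locate two points of $D$ on the boundary sphere $\partial B(p^*,r)$, and use w-convexity together with strict convexity of the small ball to force a point of $D$ inside $B(p^*,r)$.

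The execution differs in a way worth noting. The paper does not argue directly that the touching set has at least two points. Instead it performs an explicit $\varepsilon$-perturbation: it pushes the maximizer $z_0$ a distance $\varepsilon$ past $z_0$ along the ray away from one nearest point $x$, takes a nearest point $y$ of the perturbed center, and uses the spherical law of cosines to bound the angle $\angle xz_0y$ from below (so that the midpoint of $[x,y]$ is strictly closer to $z_0$ than $R$). Your version replaces this computation by the nonsmooth first-order fact that at a maximum of $p\mapsto d(p,D)$ the initial directions toward nearest points cannot lie in an open half-plane of $T_{p^*}S^2$, hence $|T|\ge 2$; you then invoke strict convexity of $\bar B(p^*,r)$ (the boundary is a small circle, not a great circle) to see that the unique minimizing geodesic between any two distinct points of $T$ dips into the open ball. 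Both routes encode the same geometry; yours is more conceptual and avoids the trigonometric estimate, while the paper's is self-contained and does not appeal to generalized-gradient reasoning for the min function. Since you flagged the $|T|\ge 2$ step as the delicate one, it would be worth writing out the compactness/first-variation argument in full rather than leaving it as a remark.
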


\begin{proof}

Define $F(z):=d(z,D)$ and let $R:=\sup_{z \in S^2}F(z)$.  As $D$ is proper and closed, $\{z : F(z) >0\}$ has non-empty interior. Thus $R>0$. By continuity of $F$, there exists $z_0$ such that $R=F(z_0)$. We note that in order to prove the lemma, it suffices to show that $R \geq \frac{\pi}{2}$. Indeed, if $R=F(z_0) \geq \frac{\pi}{2}$, then $D$ must be contained in the hemisphere which is opposite to $z_0$. 

So, for a contradiction, let us suppose that $R< \frac{\pi}{2}$. 
Then there exists $x \in D$ such that $d(z_0,x)=R$. Fix $\varepsilon$ such that $0 < \varepsilon<\frac{\pi}{2}-R$. Take the minimizing geodesic segment $[x,z_0]$ and extend it beyond $z_0$. Then pick $z$ on this geodesic segment (joining $z_0$ and $x$ extended beyond $z_0$) such that $d(z,z_0)=\varepsilon$. Then $F(z) \leq R$ and let $y \in D$ such that $d(z,y)=F(z)\le R$. By definition of $z_0$, we have $d(z_0,y)\ge R$. And thus $R\le d(z_0,y)\le R+\eps$. Apply the spherical law of cosine to the triangle with 3 vertices $z_0$, $z$, and $y$, the angle between $[z_0,x]$ and $[z_0,y]$ is bounded from below by $\pi- C$ where the angle $C$ satisfies
\[\cos C=\frac{\cos R - \cos \varepsilon \cos R}{\sin \eps \sin R}.\]

Since $\lim\limits_{\eps \to 0}\frac{\cos R - \cos \eps \cos R}{\sin \eps \sin R}=0$, then the angle between $[z_0,x]$ and $[z_0,y]$ is bigger than $\frac \pi 3$ for any sufficiently small $\eps >0$.

Note that $d(x,y) \leq d(x,z_0)+\varepsilon + d(z,y) \leq 2R+ \varepsilon <\pi,$ by choice of $\varepsilon$. Thus, there is a unique minimizing geodesic segment joining $x$ and $y$. By w-convexity, this lies in $D$. Let $u$ be its mid-point. 
Then $u\in D$. Since both $d(z_0,x)=R<\frac \pi 2$ and $d(z_0,y)\le R+\eps <\frac \pi 2$, we have an estimate $$d(z_0,u)<(1-\delta)\frac{d(z_0,x)+d(z_0,y)}{2}< (1-\delta)(R+\frac{\eps}{2}).$$ Here $\delta>0$ depends only on the angle between $[z_0,x]$ and $[z_0,y]$. As this angle is bounded below by $\pi/3$ for all $\eps$ sufficiently small, we can pick one $\delta >0$ for all these angles.  
Moreover, if $\eps$ is sufficiently small, then $(1-\delta)(R+\frac{\eps}{2})<R$. Thus, choosing $\eps$ sufficiently small, $F(z_0)=d(z_0,D) \leq d(z_0,u)<R$, which contradicts the definition of $R$ and $z_0$. \end{proof}

\begin{corollary}\label{cor:w-convex-of-sphere}
    Let $C$ be a open w-convex proper subset of a 2-sphere $S^2$ with constant curvature $1$. Then $C$ is contained in an open hemisphere.
\end{corollary}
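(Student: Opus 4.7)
The plan is to reduce to \cref{lem-baby} by passing to the closure. Concretely, I would show that $\bar C$ is a closed w-convex proper subset of $S^2$, apply \cref{lem-baby} to obtain $\bar C \subset H$ for some closed hemisphere $H$, and then use the openness of $C$ to conclude $C \subset H \setminus \partial H$, the open hemisphere. This last step is immediate: any $x \in C \cap \partial H$ would carry a whole open neighborhood into $C \subset H$, which is impossible since every neighborhood of an equator point meets both open hemispheres.

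To see that $\bar C$ is w-convex, I would take $x, y \in \bar C$ and approximate them by sequences $x_n \to x$, $y_n \to y$ in $C$. When $x$ and $y$ are not antipodal, eventually neither are $x_n, y_n$, so the unique minimizing geodesic segments $\gamma_n \subset C$ (provided by w-convexity of $C$) converge uniformly to the unique minimizing segment from $x$ to $y$, which then lies in $\bar C$. When $x$ and $y$ are antipodal, I would apply Arzelà–Ascoli to the natural reparametrizations of the $\gamma_n$ (equi-Lipschitz with speeds $d(x_n,y_n) \to \pi$) to extract a subsequential limit, necessarily a half great circle from $x$ to $y$ lying in $\bar C$.

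The main obstacle is showing that $\bar C$ is proper, since this requires combining openness, w-convexity, and uniqueness of short minimizing geodesics. If instead $\bar C = S^2$, then $C$ is open and dense yet has some point $p$ in its complement. Choosing $\delta < \pi/2$ small enough that $\exp_p$ is a diffeomorphism on the ball $B_\delta(0) \subset T_p S^2$, both of the sets
\[
U_+ := \{v \in B_\delta(0) : \exp_p(v) \in C\}, \qquad U_- := \{v \in B_\delta(0) : \exp_p(-v) \in C\}
\]
are open (openness of $C$) and dense (density of $C$ and $\exp_p$ being a local diffeomorphism) in $B_\delta(0)$, so $U_+ \cap U_-$ is nonempty. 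Any $v$ in this intersection yields $x := \exp_p(v)$ and $y := \exp_p(-v)$ in $C$ whose unique minimizing geodesic (of length $2|v| < 2\delta < \pi$) passes through $p \notin C$, contradicting w-convexity of $C$. This contradiction forces $\bar C \subsetneq S^2$, completing the reduction to \cref{lem-baby} as described in the first paragraph.
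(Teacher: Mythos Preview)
Your proof is correct and follows the same overall strategy as the paper: pass to the closure $\bar C$, verify it is a proper w-convex subset, apply \cref{lem-baby}, and then use openness of $C$ to upgrade from a closed to an open hemisphere. The only notable difference is in the properness step, where the paper instead picks three nearby points of $C$ whose (unique-geodesic) convex hull contains the boundary point; your two-point argument through $p$ via $\exp_p(\pm v)$ is a clean alternative, and you also supply the limiting argument for w-convexity of $\bar C$ that the paper leaves implicit.
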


\begin{proof}
    The closure $\bar C$ is still a w-convex proper subset of $S^2$. Indeed, we only need to prove that $\bar C$ is proper. If that is not the case, let $z \in \partial C$ and take a sufficiently small open w-convex neighborhood $U$ of $z$ in $S^2$. In $U$, we can find three points that lie in $C$ whose convex hull contains $z$, and thus $z \in C$. This implies that $C$ contains $\bar C$ and hence $C$ is not proper, a contradiction. Now by the previous \cref{lem-baby}, $\bar C$ is contained in a closed hemisphere. Since $C$ is open, $C$ must be in an open hemisphere.
\end{proof}

\begin{corollary}\label{cor:s-convex-of-sphere}
    Let $D$ be a closed s-convex proper subset of a 2-sphere $S^2$ with constant curvature $1$. Then $D$ is contained in an open hemisphere.
\end{corollary}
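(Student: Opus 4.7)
The plan is to bootstrap from \cref{lem-baby}: since $D$ is in particular w-convex, it is already contained in some closed hemisphere $H$, and I write $z_0$ for its pole and $\gamma = \partial H$ for its boundary great circle. I then want to exploit the additional strength of s-convexity (over w-convexity) to tilt $H$ slightly and obtain an open hemisphere still containing $D$.

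The crucial observation is that $D$ cannot contain any antipodal pair $\{p,-p\}$. Indeed, every meridian from $p$ to $-p$ is a minimizing geodesic, and s-convexity would force all of them to lie in $D$; but their union is all of $S^2$, contradicting properness. In particular, $D \cap \gamma$ contains no antipodal pair of $\gamma$, and since $D \cap \gamma$ is compact the supremum of pairwise distances within $D \cap \gamma$ is attained and is strictly less than $\pi$. Hence $D \cap \gamma$ sits inside a closed arc $I \subset \gamma$ of length $\alpha < \pi$; letting $m \in \gamma$ denote the midpoint of $I$, every $x \in D \cap \gamma$ satisfies the uniform bound $\langle x, m\rangle \geq \cos(\alpha/2) > 0$.

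I would then tilt toward $m$, setting $z_0' := \cos(\epsilon) z_0 + \sin(\epsilon) m$ for small $\epsilon > 0$ and aiming to verify $\langle x, z_0'\rangle > 0$ for every $x \in D$. For $x \in D \cap \gamma$ this is immediate from $\langle x, z_0'\rangle = \sin(\epsilon)\langle x, m\rangle > 0$, and for $x \in D \setminus \gamma$ with $\langle x, m\rangle \geq 0$ it is automatic since $\langle x, z_0\rangle > 0$. The only regime requiring genuine work is $x \in D$ with $\langle x, z_0\rangle > 0$ and $\langle x, m \rangle < 0$, where the inequality reduces to $\tan(\epsilon) < \langle x, z_0\rangle / |\langle x, m\rangle|$.

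The main obstacle is precisely that this ratio must admit a uniform positive lower bound so that a single $\epsilon$ suffices. I would argue by contradiction and compactness of $D$: if the ratio fails to be bounded below, a sequence $x_n \in D$ with $\langle x_n, m\rangle < 0$ and $\langle x_n, z_0\rangle / |\langle x_n, m\rangle| \to 0$ forces $\langle x_n, z_0\rangle \to 0$, and a convergent subsequence yields a limit $x_\infty \in D \cap \gamma$ with $\langle x_\infty, m\rangle \leq 0$, contradicting the uniform lower bound from the previous step. Choosing $\epsilon$ small enough then places $D$ inside the open hemisphere centered at $z_0'$.
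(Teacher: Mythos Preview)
Your argument is essentially correct and follows the same geometric idea as the paper's proof: start from the closed hemisphere supplied by \cref{lem-baby}, analyze $D\cap\gamma$ on the equator, and then tilt the pole slightly so that $D$ lands in an open hemisphere. The paper expresses this through the function $F(z)=d(z,D)$ and derives a contradiction to $\sup F=\pi/2$ by sliding $z_0$ along a geodesic, whereas you carry out the tilt in inner-product coordinates; the underlying geometry is the same, and your compactness argument in the final paragraph is the analytic counterpart of the paper's limiting argument.

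There is one small gap to patch. From ``$D\cap\gamma$ is compact and contains no antipodal pair'' you deduce that $D\cap\gamma$ sits inside a closed arc of length $\alpha<\pi$. Diameter $<\pi$ alone does not give this: three equally spaced points on $\gamma$ have pairwise distance $2\pi/3$ yet are not contained in any sub-$\pi$ arc. What does give it is the s-convexity of $D$: for $x,y\in D\cap\gamma$ with $d(x,y)<\pi$, the unique minimizing geodesic in $S^2$ is the short arc of $\gamma$, so $D\cap\gamma$ is connected and is therefore itself a closed arc of length $<\pi$. Once you insert this one line (and note separately that if $D\cap\gamma=\emptyset$ then $D$ already lies in the open hemisphere about $z_0$, so there is nothing to do), the rest of your proof goes through.
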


\begin{proof}
Define $F(z):=d(z,D)$ and let $R:=\sup_{z \in S^2}F(z)$. It suffices to prove that $R> \frac \pi 2$.

Since $D$ is s-convex, $D$ is also w-convex. By \cref{lem-baby}, we obtain that $R\ge \frac \pi 2$. Now suppose that $R=\frac \pi 2$. Let $z_0$ be a point that realizes the maximum value of $F$, i.e. $F(z_0)=\frac \pi 2$. Let $C$ be the great circle of distance $\frac \pi 2$ from $z_0$.  Since $D$ is closed, there exist points in $D$ of distance $\frac \pi 2$ from $z_0$. We let $y$ and $z$ be on $C$ such that the geodesic segment $[y,z]\subset C$ is the maximum segment in $C$ contained in $D$. Note that a priori $y$ could equal $z$.  If the length of this segment is at least $\pi$ then the entire set $C$ is contained in $D$. By s-convexity, $D=S^2$, a contradiction. Thus we we assume that the length of the segment $[y,z]$ is strictly smaller than $\pi$. We let $y_1$ and $z_1$ in $C-D$ of distance $\pi$ from each other such that one of the geodesic segments,  $[y_1,z_1]\subset C$ say,  contains $[y,z]$. Let $w$ be the midpoint of the geodesic segment from $y_1$ to $z_1$ disjoint from $[y,z]$. Let $u_t$ be the point on the geodesic segment from $z_0$ to $w$ and  of distance $t$ from $z_0$. We claim that $D$ is disjoint from a closed ball of radius $\frac \pi 2$ centered at $u_t$ for $t$ sufficiently small. Suppose this is not the case. Then for every $t$, the set $\bar{B_{u_t}(\frac \pi 2)}\setminus\bar{B_{z_0}(\frac \pi 2)}$ always contains a point $p_t$ in $D$. After passing to a subsequence of $p_t$  of the points and taking a limit, we get a point $p$ lying in the geodesic segment from $y_1$ to $z_1$ in $C$ not containing $[y,z]$. Since $D$ is closed, $p\in D$. This contradicts with the maximality of $[y,z]$ in $C$.
\end{proof}

\begin{proposition}\label{prop:hemisphere1}
    Let $D$ be a closed s-convex proper subset of $(X,g_0)$ with non-empty interior. Then the intersection of $D$ with every totally geodesic sphere of curvature $1$ is contained in an open hemisphere. 
\end{proposition}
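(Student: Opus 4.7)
The plan is to reduce to \cref{cor:s-convex-of-sphere} by showing that $D\cap\Sigma$, viewed as a subset of $\Sigma$ equipped with its own intrinsic round metric, is closed, s-convex, and a proper subset of $\Sigma$, where $\Sigma$ denotes a fixed totally geodesic $2$-sphere of constant curvature one in $X$. Closedness of $D\cap\Sigma$ in $\Sigma$ is immediate from the closedness of $D$ in $X$. Recall that $X$ is a compact rank one symmetric space normalized to have maximal sectional curvature one, so $X$ and $\Sigma$ both have diameter $\pi$ and the injectivity radius of $X$ is $\pi$.

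For s-convexity of $D\cap\Sigma$ inside $\Sigma$, take $x,y\in D\cap\Sigma$ and a minimizing $\Sigma$-geodesic $\alpha$ from $x$ to $y$. I claim that $\alpha$ is also minimizing in $X$, whereupon the s-convexity of $D$ in $X$ places $\alpha$ inside $D$. If $d_\Sigma(x,y)<\pi$, then since $\Sigma$ is totally geodesic, $\alpha$ is a geodesic of $X$ of length strictly less than $\mathrm{inj}(X)=\pi$; the injectivity of $\exp_x$ on $B(0,\pi)\subset T_xX$ forces $\alpha$ to coincide with the unique minimizing $X$-geodesic from $x$ to $y$. If $d_\Sigma(x,y)=\pi$, a continuity argument (approximating $y$ by $y'\in\Sigma$ with $d_\Sigma(x,y')<\pi$) gives $d_X(x,y)=\pi$ as well, so any half-great-circle in $\Sigma$ from $x$ to $y$ has length $\pi=d_X(x,y)$ and is therefore minimizing in $X$.

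For properness, I argue that $\Sigma\not\subset D$ by contradiction. Assume $\Sigma\subset D$ and pick an antipodal pair $p,-p\in\Sigma$; since $d_X(p,-p)=\pi$, every minimizing $X$-geodesic from $p$ to $-p$ lies in $D$. When $X=S^n$, these are the half-great-circles through $p,-p$ and they sweep out $S^n$, forcing $D=X$ and contradicting properness. When $X$ is one of $\mathbb{C}P^n$, $\mathbb{H}P^n$, or $\mathbb{O}P^2$, the minimizing $X$-geodesics between antipodes instead fill the unique max-curvature $\mathbb{K}P^1$ through them (which is $\Sigma$ when $\mathbb{K}=\mathbb{C}$, and properly contains $\Sigma$ as an $S^4$ or $S^8$ when $\mathbb{K}=\mathbb{H},\mathbb{O}$); here I bring in the non-empty interior hypothesis. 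Pick $z\in\mathrm{int}(D)$ off the $\mathbb{K}P^1$ already known to lie in $D$. By a projective-intersection count of the cut locus $\mathbb{K}P^{n-1}$ of $z$ against this $\mathbb{K}P^1$, one finds $w$ on the $\mathbb{K}P^1$ with $d_X(z,w)=\pi$, and then the minimizing $X$-geodesics from $z$ to $w$ fill a new max-curvature $\mathbb{K}P^1$ through $z$, contained in $D$. Iterating this construction, combined with the closedness of $D$, exhausts $X$, again contradicting properness. Finally, \cref{cor:s-convex-of-sphere} applied to the closed, s-convex, proper subset $D\cap\Sigma\subset\Sigma$ yields the conclusion.

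The principal obstacle is the iteration in the non-spherical case: one needs to verify that for every $z\in\mathrm{int}(D)$ the cut locus of $z$ meets the $\mathbb{K}P^1$-in-$D$ in at least one point (which follows readily from complex- or quaternionic-intersection theory for $\mathbb{C}P^n$ and $\mathbb{H}P^n$ but requires care in $\mathbb{O}P^2$), and one must organize the resulting infinite family of max-curvature $\mathbb{K}P^1$'s so that their union, together with the closure taken inside $D$, is all of $X$. A conceptually cleaner alternative might be to analyze $\partial D$ directly as an s-convex hypersurface in positive curvature, ruling out $\Sigma\subset\partial D$ via a tangent-cone or second-fundamental-form argument.
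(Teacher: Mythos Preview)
Your reduction to \cref{cor:s-convex-of-sphere} via closedness and s-convexity of $D\cap\Sigma$ is correct and matches the paper, as does the sphere case $X=S^n$. The genuine gap is exactly where you flag it: the iteration in the non-spherical case. You produce one new $\mathbb{K}P^1\subset D$ through a single interior point $z$, but to iterate you would need further interior points off the $\mathbb{K}P^1$'s already obtained, and nothing prevents $\operatorname{int}(D)$ from being a small ball. Even granting that for every $z\in\operatorname{int}(D)$ the cut locus of $z$ meets some $\mathbb{K}P^1$ already in $D$, the resulting family is parametrized only by $\operatorname{int}(D)$, and there is no mechanism forcing its union (or closure) to be all of $X$. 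The assertion that the iteration ``exhausts $X$'' is unsupported, and your proposed alternative via $\partial D$ is only a suggestion, not an argument.

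The paper sidesteps the iteration entirely. After obtaining---just as you do---a maximal curvature-$1$ sphere $S'$ through the interior point $p$ and lying in $D$, it simply takes a closed geodesic $\gamma\subset S'$ through $p$. Since s-convex implies l-convex (\cref{lem:s->l convex}), $\gamma$ is a closed geodesic in an l-convex set passing through an interior point, and the paper invokes \cite[Theorem~1.10]{Cheeger_1972} directly for a contradiction: in nonnegative curvature the distance to $\partial D$ along $\gamma$ is periodic and concave, hence constant, and the rigidity part of Cheeger--Gromoll then forces a flat strip, impossible in strictly positive curvature (compare the argument for \cref{claim:contained}). Your construction already manufactures exactly the closed geodesic needed; rather than attempting to fill $X$, stop there and appeal to Cheeger--Gromoll.
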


\begin{proof}
    The set $D$ intersects  any totally geodesic 2-sphere of curvature $1$ in a closed s-convex set. By \cref{cor:s-convex-of-sphere}, such a closed s-convex set is either proper, and hence contained in an open hemisphere, or it is the entire sphere. 
    
    Now suppose that there is a totally geodesic 2-sphere $S$ contained in $D$. If $X$ is an $n$-sphere then $D$ is equal to $X$ by  s-convexity. We thus assume that $X$ is not a sphere. We first observe that $S$ is contained in a maximal totally geodesic sphere $S'$ of constant curvature $1$. By s-convexity, the sphere $S'$ is also contained in $D$. Let $p$ be an interior point of $D$. There exists a point $q$ in $S'$ of distance $\pi$ from $p$. Otherwise, the sphere $S'$ is contained in a ball of radius smaller than $\pi$ around $p$, and thus would be homotopic to $\{p\}$. This would be in contradiction to the fact that $S'$ defines a nontrivial  homology class in $H_*(X)$ \cite[Theorem 3.42]{Besse78}. By s-convexity, all geodesic segments from $p$ to $q$ are contained in $D$. In particular, the maximal totally geodesic sphere of curvature $1$ containing $p$ and $q$ are in $D$. Let $\gamma$ be a closed geodesic in this sphere containing $p$ and $q$. Then $\gamma$ contains interior points of $D$. As $s$-convex sets are always $l$-convex by \cref{lem:s->l convex}, this contradicts with \cite[Theorem 1.10]{Cheeger_1972}.
\end{proof}

\begin{proposition}\label{lem:hemisphere2}
    Let $C$ be a maximal open w-convex proper subset of $(X,g_0)$. Then the intersection of $C$ with any totally geodesic 2-sphere of curvature $1$ is contained in an open hemisphere.
\end{proposition}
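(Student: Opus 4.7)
The plan is to mirror the strategy of Proposition~\ref{prop:hemisphere1}: set $\tilde C := C \cap S$ for any totally geodesic 2-sphere $S$ of curvature $1$ in $X$, prove that $\tilde C$ is an open, w-convex, proper subset of $S$, and then invoke Corollary~\ref{cor:w-convex-of-sphere} to conclude $\tilde C$ lies in an open hemisphere of $S$. Openness of $\tilde C$ in $S$ is immediate from openness of $C$ in $X$.

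For w-convexity of $\tilde C$ in $S$, let $x, y \in \tilde C$. If $d(x, y) < \pi$, the unique minimizing $X$-geodesic from $x$ to $y$ lies in $S$ (by total geodesy) and in $C$ (by w-convexity of $C$), hence in $\tilde C$. The delicate case is $d(x, y) = \pi$, when $x$ and $y$ are $S$-antipodal; there is then a one-parameter family of minimizing geodesics in $S$ between them, and w-convexity of $C$ only supplies a minimizing $X$-segment inside $C$, which a priori need not lie in $S$. My strategy is to approximate $y$ by a sequence $y_n \in \tilde C$ that are not $S$-antipodal to $x$ and pass to a limit of the unique segments $[x, y_n] \subset \tilde C$, obtaining a minimizing $S$-segment $\gamma$ from $x$ to $y$ lying in $\bar C \cap S$. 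The main obstacle is to promote $\gamma \subset \bar C$ to $\gamma \subset C$; here I would lean on the maximality of $C$: if every minimizing $S$-segment from $x$ to $y$ met $\partial C$, one should be able to adjoin a small open tubular neighborhood of an interior portion of such a $\gamma$ to $C$ while preserving openness, w-convexity, and properness, contradicting maximality.

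For properness of $\tilde C$ in $S$, suppose toward contradiction that $S \subset C$. The same proof as Lemma~\ref{lem:s->l convex} shows that any open w-convex subset of $X$ is l-convex: inside a ball of radius $0.9\,r(p)$, the minimizing segment supplied by w-convexity must coincide with the unique minimizing segment. Since $S \subset C$, the set $C$ contains some great circle $\gamma$ of $S$, which is a closed geodesic of $X$; and since $C$ is open, every point of $\gamma$ lies in the interior of $C$. This contradicts the Cheeger-Gromoll obstruction \cite[Theorem 1.10]{Cheeger_1972} for closed geodesics lying in the interior of an l-convex set, exactly as at the end of the proof of Proposition~\ref{prop:hemisphere1}.

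Combining the two paragraphs, $\tilde C$ is an open, w-convex, proper subset of $S$, so Corollary~\ref{cor:w-convex-of-sphere} gives the desired conclusion. The hard part of the argument is the antipodal case of w-convexity: because w-convexity, unlike s-convexity, does not guarantee that \emph{all} minimizing geodesics between two points lie in the set, the uniqueness shortcut used in Proposition~\ref{prop:hemisphere1} fails, and the maximality of $C$ must be invoked in an essential way to recover control along antipodal pairs in $S$.
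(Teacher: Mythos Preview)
Your overall structure is right, and your properness argument via Cheeger--Gromoll is valid and in fact cleaner than the paper's route. The paper does not invoke \cite[Theorem~1.10]{Cheeger_1972} here; instead, assuming $S\subset C$, it argues that $C$ must then contain every totally geodesic $S^2$ or $\mathbb{RP}^2$ sharing a closed geodesic with $S$, and propagates this through a finite chain of such subspaces (citing Schroeder--Ziller \cite[p.~155]{Schroeder_1990}) to conclude $C=X$, contradicting properness. Your l-convexity observation reaches the same contradiction without the chain, just as at the end of Proposition~\ref{prop:hemisphere1}.

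The genuine gap is in your handling of the antipodal case. The proposal to adjoin a tubular neighborhood of $\gamma$ to $C$ and appeal to maximality does not work: enlarging a w-convex set by a thin tube will in general destroy w-convexity (a point deep in $C$ and a point in the new tube need have no minimizing segment inside the union), so you get no contradiction with maximality. In fact maximality is a red herring. The clean fix is to show directly that if $C\cap S$ contains an antipodal pair $x,y$, then already $S\subset C$: choose small balls $U_x,U_y\subset C\cap S$ about $x,y$; given any $p\in S\setminus\{x,y\}$, slide $x$ and $y$ slightly toward $p$ along the great circle through $x,p,y$ to obtain $x'\in U_x$, $y'\in U_y$ with $d(x',y')<\pi$; then the unique minimizing geodesic $[x',y']$ lies in $S$ by total geodesy, contains $p$, and lies in $C$ by w-convexity. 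Thus the antipodal case collapses into the $S\subset C$ case, which your Cheeger--Gromoll argument already handles. In particular, your closing assertion that maximality ``must be invoked in an essential way'' is not correct: the proposition holds for every open w-convex proper $C$, and neither the paper's argument nor your corrected one uses maximality.
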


\begin{proof}
    By \cref{cor:w-convex-of-sphere}, because of the convexity, the intersection of $C$ and each totally geodesic 2-sphere $S$ of $g_0$ of curvature $1$ is either a subset of a hemisphere or all of $S$. In the latter case, we claim that $C = X$. This will imply the lemma. 
    
    To prove the claim,  consider any totally geodesic $S^2$ (if $X$ is a sphere) or $\mathbb RP^2$ (if $X$ is a compact symmetric space other than a sphere) containing a closed geodesic of  $S$. The set $C$ intersects this $S^2$ or $\mathbb RP^2$ in an open convex neighborhood of a closed geodesic. Then $C$ contains this entire $S^2$ or $\mathbb RP^2$.

    Now, let  $y\in X$  be arbitrary. Schroeder and Ziller in their proof of Lemma 4b that show there is a chain of finitely many totally geodesic $S^2$s or $\mathbb RP^2$s in which the first $S^2$ or $\mathbb RP^2$ intersect $S$ in a closed geodesic, and any two consecutive $S^2$ or $\mathbb RP^2$ in the chain intersect in a closed geodesic, and the last $S^2$ or $\mathbb RP^2$s contains the point $y$,  cf. \cite[p. 155]{Schroeder_1990}. Note that this finite chain is easy to obtain for $X$ the round sphere. 
     It follows that $y$ is contained in $C$. Thus, $C$ is entirely $X$, a contradiction. 
    
    Therefore, the intersection of $C$ and each totally geodesic $S^2$ is contained in a hemisphere.
\end{proof}

\section{Rigidity of CAT(1) metrics on symmetric spaces}
\label{sec:CAT_1}

In this section, we prove the CAT(1) version of our results which we will use in the next section to prove \cref{thm:main}.

\begin{theorem} 
\label{thm:extension_of_isom_inside_D}
Let $(X, g_0)$ be a connected, simply connected globally symmetric space of rank one and maximal sectional curvature $1$. Let $D$ be either
\begin{itemize}
    \item a closed subset of a proper maximal w-convex open subset of $(X,g_0)$, or
    \item a closed s-convex proper subset of $(X,g_0)$. 
\end{itemize} 
Let $(Y,g)$ be a complete simply connected smooth Riemannian CAT(1) manifold with sectional curvatures bounded above by one and $\dim(Y)=\dim(X)$. If $f:X\setminus D\to Y$ is an isometric embedding then $f$ extends to an isometric embedding from $X$ to $Y$. In particular, if $Y$ is connected then $(Y,g)$ is isometric to $(X,g_0)$.
\end{theorem}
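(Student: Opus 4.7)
The plan is to construct the extension $\tilde f: X \to Y$ by geodesic continuation, using CAT$(1)$ uniqueness in $Y$ together with the hemisphere property of $D$. By \cref{prop:hemisphere1} and \cref{lem:hemisphere2}, under either hypothesis on $D$, the intersection of $D$ with any totally geodesic $2$-sphere $S \subset (X, g_0)$ of curvature $1$ lies in an open hemisphere. Since $(X, g_0)$ is a rank-one compact symmetric space of diameter $\pi$, the antipode of any $p \in D$ inside such an $S$ lies outside $D$. In particular, for every $p \in D$, the set $\Omega_p := \{ q \in X \setminus D : d_{g_0}(p, q) < \pi\}$ is a non-empty open subset of $X \setminus D$ whose closure in $X$ has $p$ as a limit point through many $g_0$-geodesic directions.

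Given $p \in D$ and $q \in \Omega_p$, let $\gamma: [0, t] \to X$ be the unique minimizing $g_0$-geodesic from $q$ to $p$, with $t = d_{g_0}(q, p) < \pi$. Since $f$ is a smooth Riemannian isometric embedding of $X \setminus D$, its differential $df_q: T_q X \to T_{f(q)} Y$ is defined, and I set
\[
\tilde f(p) := \exp^Y_{f(q)}\bigl(t \cdot df_q(\dot\gamma(0))\bigr).
\]
Because $t < \pi$ and $Y$ is CAT$(1)$, the $g$-geodesic in $Y$ with these initial conditions is the unique minimizing geodesic from $f(q)$ of length $t$, so its endpoint is a single well-defined point of $Y$. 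For $p \in X \setminus D$ with $q$ chosen so that the $g_0$-geodesic from $q$ to $p$ stays in $X \setminus D$, the formula reproduces $f(p)$, so $\tilde f$ is consistent with $f$ where the latter is defined.

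The delicate and, I expect, principal obstacle is to show that $\tilde f(p)$ is independent of the choice of $q \in \Omega_p$. For $q_1, q_2 \in \Omega_p$ close enough to lie together with $p$ in a common totally geodesic $2$-sphere of curvature $1$ in $(X, g_0)$ (chosen so that the short side $q_1 q_2$ lies in $X \setminus D$), the triangle $p q_1 q_2$ in $(X, g_0)$ is a spherical triangle; the three side lengths are matched in $Y$ between $\tilde f(p)$, $f(q_1)$, $f(q_2)$ by construction and by the Riemannian isometry property of $f$. The CAT$(1)$ comparison inequality, combined with the equality case realized by the spherical triangle in $X$, forces rigidity of the triangle in $Y$ and hence $\tilde f_{q_1}(p) = \tilde f_{q_2}(p)$. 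Global well-definedness follows by propagating this local agreement: the set $\Omega_p$ is path-connected (by the hemisphere property and the positive codimension of the cut locus of $p$ in $X$), and simply-connectedness of $X$ rules out any monodromy obstruction.

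Once well-definedness is established, $\tilde f$ is a smooth local Riemannian isometry: in a neighborhood of any $p$, $\tilde f$ equals $\exp^Y_{f(q)} \circ df_q \circ (\exp^X_q)^{-1}$ on a normal ball of $q$, and the rigidity argument above forces the pulled-back metric $\tilde f^* g$ to coincide with $g_0$. Thus $\tilde f: X \to Y$ is an isometric embedding extending $f$. When $Y$ is connected, the image $\tilde f(X)$ is compact (hence closed) and open (a local isometry between equidimensional manifolds is open), so $\tilde f(X) = Y$ and $(Y, g)$ is isometric to $(X, g_0)$.
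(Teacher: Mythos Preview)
Your approach—directly extending $f$ by geodesic continuation and then checking it is an isometry—is different from the paper's, which instead proves that $(Y,g)$ has higher spherical rank (by constructing, via CAT(1) \emph{filling} rigidity, a $g$-totally geodesic $2$-sphere of curvature $1$ through any geodesic in $Y$ that meets $f(D^c)$) and then invokes the spherical rank rigidity theorem of Shankar--Spatzier--Wilking. The paper's route avoids ever having to show that a pointwise extension is well-defined.

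The main gap in your argument is precisely that well-definedness step. With your triangle $p\,q_1\,q_2$ the two sides $[q_i,p]$ pass through $D$, so in $Y$ you only know the hinge data at $f(q_1)$: the lengths $d_Y(f(q_1),f(q_2))=d_{g_0}(q_1,q_2)$, $d_Y(f(q_1),\tilde f_{q_1}(p))=d_{g_0}(q_1,p)$, and the included angle. CAT(1) then gives only $d_Y\bigl(f(q_2),\tilde f_{q_1}(p)\bigr)\le d_{g_0}(q_2,p)$; there is nothing forcing equality, so no rigidity is triggered and you cannot conclude $\tilde f_{q_1}(p)=\tilde f_{q_2}(p)$. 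Concretely, the Jacobi field along the $Y$-geodesic $\sigma_t$ from $f(q_t)$ (corresponding to the variation $q_t\in[q_1,q_2]$) has the same initial data as in $X$, but since $\kappa_Y\le 1$ is only an upper bound, that Jacobi field need not vanish at time $d_{g_0}(q_t,p)<\pi$; so $t\mapsto\tilde f_{q_t}(p)$ has no reason to be constant a priori. The paper circumvents this by choosing a triangle $\Delta$ lying \emph{entirely} in $S_w\setminus D$ so that \emph{all three} side lengths and angles of $f(\Delta)$ match the spherical model; this is what activates the equality case and yields a totally geodesic spherical disk in $Y$. A secondary issue: when $X$ is not a round sphere (e.g.\ $\mathbb{CP}^n$), three generic points do not lie on a common curvature-$1$ totally geodesic $2$-sphere, so your reduction to such a triangle is not available without further work; and your final claim that $\tilde f^*g=g_0$ (``the rigidity argument above forces\ldots'') rests on the same unproved equality case.
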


Now we begin the proof of \cref{thm:extension_of_isom_inside_D} which will work under either condition on the convex set in the theorem. Without loss of generality, assume that $Y$ is also connected. Note that it suffices to work with the assumption that the interior of $D$ is non-empty.  Otherwise, we can find an obvious isometric extension of $f$ to $D$.  

The main idea is showing that $(Y,g)$ has higher spherical rank, i.e. that any unit speed geodesic has a conjugate point at distance $\pi$. This follows immediately from the next lemma.

\begin{lemma} \label{lem:totgeo}
    For every point $p\in Y\setminus f(D^c)$ and $q\in f(D^c)$, and for every geodesic segment $\gamma$ from $p$ to $q$, there exists a $g$-totally geodesic sphere of sectional curvature $1$ containing $\gamma$.
\end{lemma}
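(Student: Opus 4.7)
The plan is to transfer a totally geodesic hemisphere from $X$ to $Y$ via $f$ and then close it up to a totally geodesic round $2$-sphere of $Y$, inside which $\gamma$ must lie by the totally geodesic property.

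First, I would extend $\gamma$ to a maximal $g$-geodesic $\tilde\gamma$ through $q$ and consider its pullback $f^{-1}\circ\tilde\gamma$ near $q$, which is a $g_0$-geodesic arc in $X\setminus D$. Extend this pullback in $X$ to a closed $g_0$-geodesic $\sigma$ of length $2\pi$ and include $\sigma$ in a totally geodesic $2$-sphere $S_X\subset X$ of constant curvature $1$ (such $S_X$ exists through every point tangent to every direction since $(X,g_0)$ is a CROSS with maximal sectional curvature $1$: a projective line in the projective cases, or any $2$-plane section in the spherical case). By \cref{prop:hemisphere1} (or \cref{lem:hemisphere2} in the w-convex case), $S_X\cap D$ is contained in an open hemisphere of $S_X$, so the complementary closed hemisphere $H_X$ lies entirely in $D^c$. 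Hence $H_Y:=f(H_X)$ is a closed totally geodesic hemisphere in $Y$ of curvature $1$ whose boundary $E_Y:=f(\partial H_X)$ is a closed $g$-geodesic of length $2\pi$. Note that $\sigma\ne\partial H_X$ (else $\tilde\gamma\subset f(\sigma)\subset f(D^c)$, contradicting $p\notin f(D^c)$), so $\sigma\cap H_X$ is a closed semicircle of length $\pi$, which pushes forward under $f$ to a length-$\pi$ sub-arc of $\tilde\gamma$ inside $H_Y$.

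Next, I would extend $H_Y$ to a totally geodesic round $2$-sphere $S_Y\subset Y$ of curvature $1$. Let $c_Y\in H_Y$ be the center of the hemisphere (the unique point at distance $\pi/2$ from $E_Y$), let $V:=T_{c_Y}H_Y\subset T_{c_Y}Y$, and set
\[
S_Y := \exp_{c_Y}\bigl(V\cap\overline{B_\pi(0)}\bigr).
\]
I would verify the following in sequence. (i) For any unit $v,w\in V$, the CAT(1) triangle comparison applied to $(c_Y,\exp_{c_Y}(tv),\exp_{c_Y}(tw))$ for $t<\pi$, together with a continuity argument as $t\nearrow\pi$, yields $d_Y(\exp_{c_Y}(\pi v),\exp_{c_Y}(\pi w))\le d_{S^2(1)}(\cdot)=0$, so all boundary points $\exp_{c_Y}(\pi v)$ coincide and $S_Y$ is topologically $S^2$. (ii) Combining (i) with the reverse inequality from Rauch comparison (using $\sec_Y\le 1$) gives $d_Y=d_{S^2(1)}$ on all pairs in $S_Y$, so $S_Y$ with its induced metric is isometric to the round $S^2(1)$. (iii) The Rauch equality along every geodesic $\gamma_v(t)=\exp_{c_Y}(tv)$ with $v\in V$ unit forces each perpendicular Jacobi field with $J(0)=0$ and $J'(0)\in V$ to have the spherical profile $J(t)=\sin(t)\,e(t)$ with $e$ parallel along $\gamma_v$. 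Hence each tangent $2$-plane $T_{\gamma_v(t)}S_Y=\Span(\dot\gamma_v(t),e(t))$ has sectional curvature $1$ and is invariant under the curvature tensor $R^Y$, and Cartan's criterion for totally geodesic submanifolds tangent to an $R$-invariant distribution then yields that $S_Y$ is totally geodesic in $Y$.

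Finally, the length-$\pi$ sub-arc of $\tilde\gamma$ inside $H_Y\subset S_Y$ has all its tangent vectors in $TS_Y$, so by the totally geodesic property $\tilde\gamma\subset S_Y$, and in particular $\gamma\subset S_Y$ as required. The main obstacle is sub-step (iii): upgrading the pointwise Rauch equality along each radial geodesic from $c_Y$ in directions of $V$ into the global $R^Y$-invariance of the $2$-plane distribution $TS_Y$ that is needed to apply Cartan's theorem, which requires controlling how $V$ parallel-transports along the geodesics exiting the hemisphere.
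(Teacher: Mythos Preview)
Your construction of the hemisphere $H_Y=f(H_X)$ is fine, but the extension to a full sphere $S_Y$ breaks down already at step~(i): the CAT(1) hinge inequality goes in the \emph{opposite} direction from what you claim. In a CAT($\kappa$) space the Alexandrov angle at a vertex is at most the comparison angle; unwinding this for a hinge with sides $t,t$ and angle $\alpha$ gives $d_Y(\exp_{c_Y}(tv),\exp_{c_Y}(tw))\ \geq\ d_{S^2(1)}(\cdot)$, not $\leq$. (Sanity check: Euclidean space is CAT(1), and there the radial geodesics from $c_Y$ certainly do not reconverge at time $\pi$.) So CAT(1) alone gives no reason whatsoever for the points $\exp_{c_Y}(\pi v)$ to coincide. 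Step~(ii) compounds this: Rauch with $\sec_Y\le 1$ yields the \emph{same} lower bound on Jacobi fields and hence on spreading of geodesics, not a ``reverse inequality.'' You are invoking the same one-sided estimate twice and calling it a sandwich. Consequently the set $S_Y$ you define need not be a topological sphere, need not carry the round metric, and step~(iii) --- which you already flag as the main obstacle --- never gets off the ground.

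What the paper does differently is to use not the CAT(1) \emph{inequality} but its \emph{rigidity} (equality case). One chooses a geodesic triangle $\Delta\subset S_w\setminus D$ of perimeter $<2\pi$ that lies in the open hemisphere containing $D\cap S_w$ and $f^{-1}(q)$. Since $\Delta$ sits in a round $2$-sphere, $f(\Delta)$ has side lengths and angles identical to its spherical comparison triangle; the rigidity statement for CAT(1) spaces (Ballmann, Prop.~3.16) then produces a \emph{totally geodesic} filling $T_{\Delta'}\subset Y$ isometric to the spherical filling $T_\Delta$. One glues $T_{\Delta'}$ to $f(S_w\setminus T_\Delta)$ along $f(\Delta)$, checks the two pieces agree near the seam (using uniqueness of minimizing geodesics in the open hemisphere), and obtains a $g$-totally geodesic round $2$-sphere $S_v'$ tangent to $\gamma$ at $q$. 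The point is that the totally geodesic conclusion comes for free from the rigidity of the comparison, so no Cartan-type criterion or $R$-invariance argument is needed.
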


\begin{proof} In the case that $D$ is a closed subset of an open w-convex set, without loss of generality, we can replace $D$ by its convex hull in $X$ since this convex hull is still contained in the same open w-convex set.

Take $q\in f(D^c)$ and look at a geodesic segment $\gamma$ from $q$ to $p$. Let $v$ be the unit tangent vector to this geodesic segment at $q$. Since $\dim X=\dim Y$, a beginning segment $\gamma_0$ of this geodesic segment belongs to $f(D^c)$. Let $w=Df^{-1}(v)$. There exists a totally geodesic sphere $S_w$ of sectional curvature $1$ in $(X,g_0)$ tangent to $w$ at $f^{-1}(q)$. Note that by \cref{prop:hemisphere1} and \cref{lem:hemisphere2}, the intersection $D\cap S_w$ is contained in an open hemisphere. Since $f$ is an isometry on $D^c$, $f^{-1}(\gamma_0)$ is a geodesic segment. By moving $q$ along $\gamma_0$, $f^{-1}(q)$ moves closer toward $D$. Without loss of generality, we assume that $f^{-1}(q)$ is also contained in an open hemisphere of $S_w$ which contains $S_w\cap D$.

\begin{claim*}
There exists a geodesic triangle $\Delta\subset S_w\setminus D$ that lies in an open hemisphere containing $D\cap S_w$ and $f^{-1}(q)$. 
\end{claim*}
\begin{proof}[Proof of Claim]
By the  above, we know that $D \cap S_w$ and $f^{-1}(q)$ are contained in an open hemisphere of $S_w$. Consider three points near the equator of this open hemisphere, on the side of $D\cap S_w$. We take the $g$-geodesic triangle spanned by these three points. Eventually, bringing these three points close enough to the equator, we can assume that the entire geodesic triangle lies outside $D$ (since $D \cap S_w$ is a compact subset of the open hemisphere, in both cases of the theorem).  
\end{proof}

Continuing with the proof of Lemma \ref{lem:totgeo}, note that the triangle $\Delta$ has the same edge lengths and angles as its comparison triangle in the 2-sphere of sectional curvature $1$. So, the perimeter of $\Delta$ is smaller than $2\pi$. Because $f$ is an isometry, $f(\Delta)$ has the same properties.  Since $g$ is CAT(1) by assumption, the rigidity part of Rauch's triangle comparison theorem \cite[Proposition 3.16]{Ballmann2004} implies that there exists a totally geodesic filling $T_\Delta'$ of $f(\Delta)$ in the $g$ metric which is isometric to the totally geodesic filling $T_\Delta \subset S_w$ of the spherical triangle $\Delta$ in the $g_0$ metric.

 Define $S_v':=f(S_w\setminus \T_{\Delta}) \cup T_{\Delta'}$. We claim that $S_v'$ is a $g$-totally geodesic 2-sphere of radius $1$. To prove this, it suffices to show that $f(T_{\Delta}\setminus D)$ and $T_{\Delta'}$ agree with each other in a neighborhood of $f(\Delta)$. This is because $f(S_w\setminus D)$ and $T_{\Delta}'$ are both $g$-totally geodesic subsets and so, if $f(T_{\Delta}\setminus D)$ and $T_{\Delta'}$ agree near $f(\Delta)$, then $S_v'$ will also be $g$-totally geodesic. Moreover, $S_v'$ is a topological sphere by construction.  
 
 We now establish the claim that $f(T_{\Delta}\setminus D)$ and $T_{\Delta'}$ agree with each other in a neighborhood of $f(\Delta)$. Indeed,  suppose $x\in f(T_\Delta\setminus D)$ lies near its boundary $f(\Delta)$, and let $y=f^{-1}(x)$. Then there exists a $g_0$-geodesic segment $l$ outside of $D$ containing $y$ with two endpoints $A,B$ in $\Delta$.  Since $\Delta$ lies in an open hemisphere by the  Claim above, the minimal geodesic between $A$ and $B$ is unique.  The image $f(l)$ is the unique minimal  $g$-geodesic segment containing $x$ and $f(A)$ and $f(B)$
 in $f(T_\Delta\setminus D)$. Since $T_{\Delta'}$ is totally geodesic and $g$ agrees with $f_*g_0$ on $f(D^c)$, the segment $f(l)$ is also contained in $T_{\Delta'}$. In particular, $x\in T_{\Delta'}$. An analogous argument shows that a neighborhood of $f(\Delta)$ in $T_{\Delta'}$ belongs to $f(T_{\Delta}\setminus D)$.

Finally, the geodesic segment $\gamma$ from $q$ to $p$ has the initial vector tangent to $S_v'$, hence is contained entirely in $S_v'$.
\end{proof}

\begin{corollary}\label{cor:higherrank-inside}
    For every $p\in f(D^c)$,  every $v \in T_pY$ has higher spherical rank.
\end{corollary}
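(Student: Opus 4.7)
The plan is to split on whether the geodesic $\gamma_v$ emanating from $p\in f(D^c)$ with initial velocity $v$ ever enters the complement $Y\setminus f(D^c)$. If it does, with $\gamma_v(t_0)=q'\in Y\setminus f(D^c)$ for some $t_0\ne 0$, then the geodesic segment from $q'$ to $p$ is a reversed sub-arc of $\gamma_v$ whose tangent vector at $p$ is $\pm v$. Applying \cref{lem:totgeo} with its ``$p$'' and ``$q$'' taken to be our $q'$ and $p$ respectively, this segment is contained in a $g$-totally geodesic $2$-sphere $S$ of sectional curvature $1$. Since $S$ is totally geodesic and $\pm v\in T_p S$, the whole geodesic $\gamma_v$ must remain in $S$ for all time by uniqueness of geodesics with given initial data. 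On a round $2$-sphere of curvature $1$, every geodesic admits a conjugate point at distance $\pi$, so $v$ has higher spherical rank.

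Otherwise, $\gamma_v(t)\in f(D^c)$ for every $t\in\R$. Openness of $f(D^c)$ provides a tubular neighborhood of each compact arc of $\gamma_v$ inside $f(D^c)$, so pulling back by $f^{-1}$ yields a complete geodesic $\gamma$ in $(X,g_0)$ with $f\circ\gamma=\gamma_v$. Since $(X,g_0)$ is a compact rank one symmetric space with maximal curvature $1$, a direct Jacobi field computation for each of $S^n$, $\Cb P^n$, $\Hb P^n$, and $\KO P^2$ produces a nontrivial Jacobi field along $\gamma$ vanishing at parameters $0$ and $\pi$ (along the complex, quaternionic, or octonionic direction of curvature $1$ in the projective cases, and along any parallel direction in $S^n$). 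As $f$ is a local isometry on the tubular neighborhood, this Jacobi field transfers through $Df$ to a nontrivial Jacobi field along $\gamma_v$ with the same zeros, so $\gamma_v$ has a conjugate point at distance $\pi$.

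The main obstacle I anticipate is a careful invocation of \cref{lem:totgeo} in the first case: verifying that the totally geodesic sphere provided by the lemma contains \emph{all} of $\gamma_v$ and not merely the segment from $q'$ to $p$. This reduces to the standard fact that a geodesic in $Y$ tangent to a totally geodesic submanifold at a point is contained in that submanifold for as long as it exists. The second case is then routine once the openness of $f(D^c)$ supplies the tubular neighborhood needed for the Jacobi field transfer, and there are no further conjugate-point calculations beyond the standard CROSS structure.
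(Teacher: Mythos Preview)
Your proof is correct and follows essentially the same approach as the paper: split on whether $\gamma_v$ leaves $f(D^c)$, invoke \cref{lem:totgeo} in the first case to trap $\gamma_v$ in a totally geodesic curvature-$1$ sphere, and in the second case pull back to the symmetric space $(X,g_0)$ and use that every geodesic there has a conjugate point at $\pi$. Your explicit discussion of the tubular neighborhood and the Jacobi-field transfer is slightly more detailed than the paper's, which simply observes that the pulled-back geodesic is a closed geodesic of length $2\pi$ in $X\setminus D$ and inherits a conjugate point at $\pi$ under the isometry $f$, but the substance is identical.
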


\begin{proof}

    Let $\gamma$ be the geodesic with initial vector $v$. If $\gamma$ is contained entirely in $f(D^c)$ then $\gamma$ is the image of a closed geodesic of length $2\pi$ in $X\setminus D$. Since $f$ is isometric, some conjugate point of $p$ along $\gamma$ has distance $\pi$ from $p$, and thus $v$ has higher spherical rank.

    Now  assume that $\gamma$ intersects $f(D^c)^c$ at a point $q$. By \cref{lem:totgeo}, there exists a totally geodesic sphere $S$ of curvature $1$ containing the segment of $\gamma$ from $p$ to $q$. It follows that the entire $\gamma$ is contained in $S$. Therefore $v$ has higher spherical rank. 
\end{proof}

\begin{lemma}\label{lem:higherrank-outside}
    For every $p\in Y\setminus f(D^c)$, the set of  $v \in T_pY$ with higher spherical rank w.r.t $g$ is both open and closed.   
\end{lemma}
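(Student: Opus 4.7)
The plan is to prove closedness by a standard Jacobi-field limit argument and openness by combining Lemma \ref{lem:totgeo} with openness of $f(D^c)$ in $Y$. The main obstacle is handling vectors whose associated geodesic never enters $f(D^c)$.

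For closedness: suppose $v_n \to v$ in $T_pY$ with each $v_n$ of higher spherical rank. Choose Jacobi fields $J_n$ along $\gamma_{v_n}$ with $J_n(0)=J_n(\pi)=0$ and $|J_n'(0)|=1$. By smooth dependence of the Jacobi equation on its initial data and on the base geodesic, combined with Arzel\`a--Ascoli, a subsequence converges to a Jacobi field $J$ along $\gamma_v$ with $J(0)=J(\pi)=0$ and $|J'(0)|=1$, so $v$ has higher spherical rank.

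For openness, the key point is that $f(D^c)$ is open in $Y$ (since $f$ is an isometric embedding between equidimensional manifolds), so by continuous dependence of geodesics on initial data the set
\[
U := \bigl\{\, v' \in T_pY \setminus \{0\} : \gamma_{v'} \text{ meets } f(D^c) \,\bigr\}
\]
is open in $T_pY \setminus \{0\}$. For any $v' \in U$ with $q_0 := \gamma_{v'}(t_0) \in f(D^c)$, Lemma \ref{lem:totgeo} produces a totally geodesic $2$-sphere $S$ of curvature $1$ in $(Y,g)$ containing the segment of $\gamma_{v'}$ from $p$ to $q_0$. Since a totally geodesic submanifold is determined by one of its tangent planes, all of $\gamma_{v'}$ lies in $S$ as a great circle, and hence its first conjugate point is at distance $\pi$. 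Thus $U \subseteq A_p$, the set of higher-spherical-rank vectors at $p$, giving openness of $A_p$ at every $v \in U$.

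The remaining case --- the main technical obstacle --- is $v \in A_p \setminus U$, i.e.\ $\gamma_v \subset Y \setminus f(D^c)$ for all time. My plan is to invoke Rauch rigidity: a conjugate point at distance exactly $\pi$ under $\kappa \le 1$ forces a parallel unit vector field $P \perp \dot\gamma_v$ along $\gamma_v$ with $K(\dot\gamma_v, P) \equiv 1$, producing a candidate totally geodesic $2$-sphere $S$ of curvature $1$ through $p$ tangent to $v$. Since $D$ is a proper subset of $X$ and $f(D^c)$ is a nonempty open set in the connected manifold $Y$, one expects $S \cap f(D^c) \neq \emptyset$ --- a curvature-$1$ $2$-sphere cannot be trapped entirely inside the closed set $Y \setminus f(D^c)$. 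Once this is established, a perturbation argument (using continuity of the curvature tensor and of solutions to the Jacobi equation to produce continuously varying totally geodesic $2$-spheres $S_{v'}$ for $v'$ in a neighborhood of $v$) should yield spheres $S_{v'}$ that still meet $f(D^c)$, placing $v'$ into $U \subseteq A_p$ and completing the proof of openness.
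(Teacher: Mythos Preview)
Your closedness argument for the set $A_p$ of higher-spherical-rank vectors is correct and standard. The problem is entirely in the openness argument, specifically in the case $v \in A_p \setminus U$.

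The step ``Rauch rigidity \ldots\ producing a candidate totally geodesic $2$-sphere $S$ of curvature $1$'' does not go through. Rauch rigidity at a conjugate point at distance exactly $\pi$ under $\kappa \le 1$ yields a parallel unit field $P$ along $\gamma_v$ with $K(\dot\gamma_v, P) \equiv 1$, but this is information \emph{only along the geodesic}. It says nothing about integrability of the $2$-plane field $\Span(\dot\gamma_v, P)$ to a totally geodesic surface; that would require control of the second fundamental form of the exponentiated surface, i.e.\ control of the curvature tensor in a full neighborhood, which you do not have. So there is no sphere $S$ to work with. The subsequent claims---that such an $S$ must meet $f(D^c)$, and that nearby $v'$ carry continuously varying totally geodesic spheres $S_{v'}$---are therefore unsupported, and the last one is generally false: totally geodesic submanifolds are rigid and need not deform with parameters.

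The paper sidesteps this case entirely by proving a different statement about a smaller set: it shows that your set $U$ (denoted $\mathcal{V}$ there) is itself both open \emph{and closed} in $T_pY\setminus\{0\}$. Openness of $U$ is exactly your argument. Closedness of $U$ is the real content: given $v_n \to v$ with each $v_n \in U$, the totally geodesic spheres $S_{v_n} \subset Y$ supplied by Lemma~\ref{lem:totgeo} subconverge to a totally geodesic curvature-$1$ sphere $S_v$ containing $\gamma_v$, while the corresponding spheres $S_{v_n}^* \subset X$ subconverge to a sphere $S_v^*$ with $f(S_v^* \cap D^c) \subset S_v$. Since $S_v^* \cap D$ lies in an \emph{open} hemisphere (Propositions~\ref{prop:hemisphere1} and \ref{lem:hemisphere2}), the great circle in $S_v^*$ corresponding to $\gamma_v$ must exit $D$, so $\gamma_v$ meets $f(D^c)$ and $v \in U$. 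Because $U$ is nonempty and the unit tangent sphere is connected, $U$ is everything; hence $A_p \setminus U = \emptyset$ and the case you were struggling with never occurs. The point is that the totally geodesic spheres are produced \emph{extrinsically}, via the isometry $f$ and CAT(1) triangle-filling rigidity in Lemma~\ref{lem:totgeo}, and then passed to the limit---not manufactured from Rauch rigidity along a single geodesic.
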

\begin{proof}

    We in fact prove that the set $\mathcal V$ of vectors $v\in T_pY$ such that the geodesic ray with initial vector $v$ intersects $f(D^c)$ is both open and closed. By  \cref{lem:totgeo}, such a vector has higher spherical rank.

    First we prove that $\mathcal V$ is open. Fix $p$ as in the statement of the lemma. Let $v_q$ be an initial vector of a geodesic segment joining $q \in f(D^c)$ and $p$. Let $N_p$ denote the sphere of radius $d(p,q)$ around the point $p$. Since $\dim(Y)=\dim(X)$, $f(D^c)$ is open in $Y$. Thus a neighborhood of $q$ is contained in $f(D^c)$. If $q' \in N_p$ near $q$, then there exists a unique vector $v' \in T_pY$ such that the geodesic ray initially tangent to $v'$ passes through $q'$. This proves that $\mathcal V$ is open. 

    Next, we prove that $\mathcal V$ is closed. Suppose that there exists a sequence of vectors $v_n \in T_p M $ converging to $v \in T_p M$ such that
\begin{itemize}
    \item $\gamma_{v_n}$ contains an interior point of $f(D^c)$, {\bf and} 
    \item $\gamma_v$ does not contain any interior point of $f(D^c)$.
\end{itemize}

Let $S_{v_n}$ be a totally geodesic sphere containing $\gamma_{v_n}$, as in \cref{lem:totgeo}. The sequence $S_{v_n}$ converges to a totally geodesic subset $S_v$ which contains the geodesic passing through $p$ in the direction of $v$.

Let $\phi _n$ be isometries $S^2 \mapsto  S_{v_n}$. 
Let $\phi: S^2 \mapsto Y $ be an accumulation point of the $\phi_n$.
We claim that they converge to an isometry $\phi: S^2 \mapsto S_v$.  Indeed, suppose $a_n, b_n \in S_{v_n}$ are sequences of points converging to $a$ and $b$ respectively,  with distance $d(a_n, b_n) = \pi$ in $Y$.  Then also the distance $d(a,b)$ in $Y$ is $\pi$. 
Finally, suppose $\phi$ is not an isometry.  Then there is a (closed) geodesic $\delta$ in $S^2$ on which $\phi$ is not an isometry.  This however is impossible since $\phi$ preserves 
the distance $\pi$ for opposite points of $\delta$.

We let $S_{v_n}^*$ be the totally geodesic 2-sphere in $X$ containing $f^{-1}(S_{v_n}\cap f(D^c))$. Passing to a subsequence, we assume that $S_{v_n}^*$ converges to a totally geodesic 2-sphere $S_v^*$.

We observe that $S_v^*$ is isometric to $S_v$, as they are both 2-spheres of curvature $1$ and $f$ maps $S_v^*\cap D^c$ isometrically to $f(D^c)\cap S_v$. It follows that the restriction of $f$ to $S_v^*\cap D^c$ extends to an isometry $f_S:S_v^*\to S_v$. The pull-back $f_S^{-1}(\gamma_v)$ is a geodesic in $S_v^*$. Since $S_v^*\cap D$ is contained in an open hemisphere, $f_S^{-1}(\gamma_v)$ intersects $S_v^*\cap D^c$ nontrivially. Pushing forward, we get that $\gamma_v$ contains a point in $S_v\cap f(D^c)$. In other words, $\gamma_v$ contains an interior point of $f(D^c)$. This is a contradiction.
\end{proof}
\begin{proof}[Proof of \cref{thm:extension_of_isom_inside_D}]
    For every $p\in Y\setminus f(D^c)$, pick $q\in f(D^c)$, then the initial vector $v$ of the geodesic segment from $p$ to $q$ has higher spherical rank by \cref{lem:totgeo}. Hence the set of higher spherical rank vectors in $T_pY$ is non-empty.  By \cref{lem:higherrank-outside}, every vector tangent to $Y$ at $p$ has higher spherical rank. Combining with \cref{cor:higherrank-inside}, every tangent vector of $Y$ has higher spherical rank.
    
    By spherical rank rigidity \cite[Theorem 1]{SSW}, $Y$ is a symmetric space of compact type. We show that $f$ extends to an isometry from $X$ to $Y$. First we note that $(Y,g)$ is isometric to $(X,g_0)$. Indeed, $X$ and $Y$ are both connected, simply connected symmetric spaces of compact type of the same dimension that are isometric on an open set. Thus, $X$ and $Y$ are isometric. By homogeneity, there is an isometry $\bar{f}$ agreeing with $f$ on $D^c$. So $f$ extends to an isometry from $X$ to $Y$.
\end{proof}

\section{Proof of Theorem 1.1}\label{sec:CAT(1)}
\label{sec:proof_of_main_thm}

In this section, we will reduce  \cref{thm:main} to   \cref{{thm:extension_of_isom_inside_D}}. In fact, we have a slightly more general version of \cref{thm:main} as follows.

\begin{theorem}\label{thm:main-embedding}
    Let $(X, g_0)$ be a connected, simply connected globally symmetric space of rank one with maximal curvature $1$.  Let $D$ be a closed s-convex subset of $(X,g_0)$. Let $(Y,g_1)$ be a connected, simply connected Riemannian manifold of sectional curvature $0\le \kappa\le 1$ and of dimension $\dim Y=\dim X$. If $f:X\setminus D\to Y$ is an isometric embedding, then $f$ extends to an isometry $f:X\to Y$.
\end{theorem}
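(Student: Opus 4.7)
The strategy is to reduce \cref{thm:main-embedding} to the already established CAT(1) version in \cref{thm:extension_of_isom_inside_D}. The only additional hypothesis of that theorem, beyond what is given here, is that $(Y,g)$ be CAT(1). Hence the plan is to show that under $0\le \kappa\le 1$ the metric $g_1$ is in fact globally CAT(1); then \cref{thm:extension_of_isom_inside_D} immediately extends $f$ to an isometry $X\to Y$.

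Since $(Y,g_1)$ is a smooth simply connected Riemannian manifold with sectional curvature $\le 1$, Rauch comparison makes it locally CAT(1) on convexity neighbourhoods. By the standard globalization theorem for CAT(1) spaces (a simply connected, locally CAT(1) geodesic space is globally CAT(1) if and only if it contains no closed geodesic of length strictly less than $2\pi$), it suffices to rule out short closed geodesics in $(Y,g_1)$, and it is exactly here that the lower curvature bound $\kappa\ge 0$ enters.

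Assume for contradiction that $\gamma$ is a closed geodesic in $(Y,g_1)$ of minimal length $L<2\pi$. Since every closed $g_0$-geodesic in $X$ has length exactly $2\pi$ and $f$ is isometric on $X\setminus D$, the curve $\gamma$ cannot be contained in $f(X\setminus D)$ and must enter $f(D)$; by Klingenberg's lemma combined with $\kappa\le 1$, the injectivity radius of $(Y,g_1)$ then equals $L/2<\pi$. A second-variation argument along a parallel (or, when $\dim Y$ is even, a Synge-twisted) normal variation of $\gamma$ gives index form $\int_0^L (|V'|^2-R(V,\gamma',V,\gamma'))\,dt$; using $\kappa\ge 0$ everywhere together with the strict positivity of the $g_0$-curvature along the non-empty arc of $\gamma$ in $f(X\setminus D)$, this form is strictly negative on a suitable $V$, producing a nearby loop shorter than $\gamma$. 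Combined with Birkhoff curve shortening and a min-max argument on the free loop space of simply connected $Y$, this yields a closed geodesic in $Y$ of length strictly less than $L$, contradicting minimality. Once no closed geodesic of length $<2\pi$ exists, $(Y,g_1)$ is CAT(1) and \cref{thm:extension_of_isom_inside_D} extends $f$ to an isometry $X\to Y$.

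The main obstacle is this last step: since $Y$ is simply connected every loop is contractible, so Synge's classical argument (which rules out non-trivial free homotopy classes) does not apply directly. One must realize the length of the shortest closed geodesic as a min-max critical value of energy on the free loop space and verify that the local shortening produced by $\kappa\ge 0$ is incompatible with this characterization. A tempting alternative, avoiding the CAT(1) reduction altogether, would be to adapt the proofs of \cref{lem:totgeo} and \cref{lem:higherrank-outside} to the smooth Riemannian setting under $\kappa\le 1$ (for which the rigidity part of Rauch's triangle comparison still holds on triangles of perimeter $<2\pi$ without requiring a global CAT(1) hypothesis), and thus directly construct totally geodesic $2$-spheres of curvature $1$ tangent to every vector of $Y$, invoking spherical rank rigidity as in the CAT(1) case.
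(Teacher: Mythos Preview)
Your overall strategy matches the paper's: reduce to \cref{thm:extension_of_isom_inside_D} by proving $(Y,g_1)$ is CAT(1), which (via \cite[II.4.16]{BH} or the globalization theorem) amounts to showing $\mathrm{injrad}(Y,g_1)\ge \pi$, equivalently ruling out closed geodesics of length $<2\pi$. The paper also begins by taking the Klingenberg closed geodesic $\gamma$ of length $2\,\mathrm{injrad}<2\pi$. However, from that point on there are two genuine gaps in your outline.

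\medskip
\textbf{(1) You have not shown that $\gamma$ meets $f(X\setminus D)$.} You argue only that $\gamma\not\subset f(X\setminus D)$ and then invoke ``the non-empty arc of $\gamma$ in $f(X\setminus D)$'' without justification. (Also, ``$f(D)$'' is undefined; $f$ is only given on $X\setminus D$.) Ruling out $\gamma\subset f(D^c)^c$ is a separate, nontrivial step and is precisely where the paper uses $\kappa\ge 0$: one first checks that $f(D^c)^c$ is s-convex and hence l-convex in $(Y,g_1)$, and then applies the Cheeger--Gromoll structure theorem for convex sets in nonnegative curvature \cite[Theorem 1.10]{Cheeger_1972}. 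If $\gamma$ lay inside the l-convex set $f(D^c)^c$, the distance to $\partial f(D^c)^c$ would be constant along $\gamma$, producing a flat strip $\gamma\times[0,a]$ whose boundary is a closed geodesic of length $<2\pi$ in $\partial f(D^c)^c$. But $\partial f(D^c)^c$ is isometric to $\partial D\subset (X,g_0)$, which contains no closed geodesic shorter than $2\pi$. Your parallel--field second variation argument does use $\kappa\ge 0$, but only \emph{after} one already knows $\gamma$ has an arc in $f(D^c)$; it does not replace this step.

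\medskip
\textbf{(2) The contradiction step via Synge/min-max is not completed, and the paper proceeds differently.} You correctly flag the obstacle: a negative second variation only gives shorter nearby \emph{loops}, and on simply connected $Y$ curve shortening can collapse them to points; realizing the shortest closed geodesic as a min-max value with a usable index bound is delicate and you do not carry it out. The paper avoids this entirely. Once $\gamma$ has an arc $\gamma_1\subset f(D^c)$ with endpoints on $\partial f(D^c)^c$, one pulls $\gamma_1$ back to a geodesic arc in a totally geodesic curvature-$1$ sphere $S\subset X$; since $D\cap S$ lies in an \emph{open} hemisphere (\cref{prop:hemisphere1}), the arc $f^{-1}(\gamma_1)$ has length $>\pi$ and hence admits a length-decreasing variation inside $S\setminus D$. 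Pushing forward and concatenating with the rest of $\gamma$ yields closed curves $\gamma_s$ strictly shorter than $\gamma$. The contradiction then comes from the cut-locus characterization at distance equal to the injectivity radius (do Carmo \cite[Ch.~13, Prop.~3.4]{doCarmo92}): the antipodal points $p_2,q_2$ on $\gamma$ are not conjugate (Rauch, $\kappa\le 1$), yet the variation produces, in the limit, a third minimizing geodesic between them perpendicular to $\gamma$, contradicting \cite[Ch.~13, Prop.~2.12]{doCarmo92}. This argument uses neither Synge nor any min-max theory.

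\medskip
Your proposed alternative of adapting \cref{lem:totgeo} and \cref{lem:higherrank-outside} directly, bypassing the CAT(1) hypothesis, is interesting but note that the rigidity case of triangle comparison used there (\cite[Prop.~3.16]{Ballmann2004}) is stated for CAT(1) spaces; making it work with only local $\kappa\le 1$ and no a priori injectivity-radius bound would itself require controlling exactly the short-geodesic phenomenon you are trying to avoid.
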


\cref{thm:main-embedding} will follow from \cref{thm:extension_of_isom_inside_D} because of the following proposition.
\begin{proposition}
    \label{prop:metric_g1_is_CAT_1}
    Under the assumptions of \cref{thm:main-embedding}, the manifold $(Y,g_1)$ is a CAT(1) metric space.
\end{proposition}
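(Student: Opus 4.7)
The plan is to verify CAT(1) for $(Y,g_1)$ via the standard localize-then-globalize strategy. Since $\kappa\le1$ everywhere on $Y$, Rauch's comparison theorem immediately yields that $(Y,g_1)$ is locally CAT(1): every point admits a convex geodesic ball on which triangles satisfy the spherical comparison inequality. This takes care of the ``local'' half and uses only the upper curvature bound.

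To globalize, I would invoke the Cartan--Hadamard type globalization theorem for CAT(1) spaces (in the form due to Alexandrov, cf.\ also Bowditch, Ballmann): a complete, simply connected, locally CAT(1) geodesic space is globally CAT(1) if and only if it contains no closed geodesic of length strictly less than $2\pi$. Completeness of $(Y,g_1)$ follows either from hypothesis or from the fact that $f(X\setminus D)$ is isometric to an open subset of the compact symmetric space $(X,g_0)$ covering the complement of a compact region, so the problem reduces to ruling out closed $g_1$-geodesics of length $<2\pi$.

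Suppose $\gamma$ is a closed $g_1$-geodesic of length $L<2\pi$. If $\gamma\subset f(X\setminus D)$, then $f^{-1}\circ\gamma$ is a closed $g_0$-geodesic of length $L$ in the CROSS $(X,g_0)$, contradicting the fact that every closed geodesic in such a space has length exactly $2\pi$. The substantive case is when $\gamma$ enters the ``indoor'' region $Y\setminus f(X\setminus D)$, and here the hypothesis $\kappa\ge0$ is essential --- as \cref{cor:shortclosed} and \cref{thm:surface_example} show, dropping the lower curvature bound actually permits short closed geodesics. My approach would be to combine the hemispherical containment of $D$ from \cref{prop:hemisphere1} with Toponogov's triangle comparison theorem (applicable precisely because $\kappa\ge0$): the arcs of $\gamma$ inside $f(X\setminus D)$ pull back to $g_0$-geodesic arcs lying on totally geodesic $2$-spheres of curvature $1$, where the intersection with $D$ sits inside an open hemisphere, while the Toponogov lower bound on angles controls the behavior of the ``indoor'' arcs of $\gamma$ from below, ultimately forcing the total length of $\gamma$ up to $2\pi$.

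The principal obstacle is this last quantitative step: converting the hemispherical structure of $D\cap S^2$ together with the Toponogov angle comparison into a clean $2\pi$-lower-bound for any closed geodesic threading in and out of $f(D)$. The first two steps of the argument are essentially off-the-shelf, but this final step is the geometric heart of the proof and is where the curvature lower bound $\kappa\ge0$ must be exploited in a genuinely non-trivial way.
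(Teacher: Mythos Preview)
Your overall framework is correct and matches the paper's: reduce CAT(1) to the absence of closed geodesics of length $<2\pi$ (the paper phrases this as showing the injectivity radius is at least $\pi$, via \cite[II, Prop.~4.16]{BH}), and dispose of the case $\gamma\subset f(X\setminus D)$ by pulling back to $(X,g_0)$. However, the hard case is treated quite differently in the paper, and your sketch both omits a subcase and proposes a tool that does not obviously do the job.

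First, you do not address the possibility that $\gamma$ lies \emph{entirely} in the indoor region $Y\setminus f(X\setminus D)$. The paper handles this separately: it shows $f(D^c)^c$ is s-convex (hence l-convex) for $g_1$, and then invokes the Cheeger--Gromoll structure theorem for locally convex sets in nonnegative curvature \cite[Theorem~1.10]{Cheeger_1972}. The distance from $\gamma$ to $\partial(f(D^c)^c)$ is weakly convex along $\gamma$, hence constant, producing a flat strip whose far edge is a closed geodesic of the same short length lying in $\partial(f(D^c)^c)\cong\partial D$; since $(X,g_0)$ has no closed geodesic shorter than $2\pi$, this is a contradiction. This is the principal place where $\kappa\ge0$ enters, and it is not via Toponogov.

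Second, for the mixed case the paper does \emph{not} try to bound the indoor arcs from below. It uses only the outdoor arc: a maximal arc $\gamma_1\subset f(D^c)$ pulls back to a geodesic arc in a totally geodesic curvature-$1$ sphere $S$, and since $D\cap S$ lies in an open hemisphere (\cref{prop:hemisphere1}), $\gamma_1$ has length strictly greater than $\pi$. An arc of length $>\pi$ on a unit $2$-sphere can be varied rel endpoints to strictly shorter arcs; pushing this forward yields a variation of $\gamma$ through strictly shorter closed curves. Since $\gamma$ realizes twice the injectivity radius, one then runs a do~Carmo--style argument \cite[Ch.~13, Prop.~3.4]{doCarmo92}: diametrically opposite points $p_2,q_2$ on $\gamma$ are not conjugate (Rauch, $\kappa\le1$), yet the variation produces a third minimizing segment between them distinct from the two halves of $\gamma$, contradicting the dichotomy for cut points. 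Your Toponogov idea would instead try to force the total length up to $2\pi$ directly, but a short closed geodesic could in principle consist of one outdoor arc of length barely above $\pi$ and one very short indoor arc, so angle comparison alone does not obviously close the gap; the paper's shortening-plus-cut-locus argument sidesteps this entirely.
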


\begin{proof}
By \cite[Part II, Proposition 4.16]{BH},  it suffices to show that the injectivity radius of $Y$ is at least $\pi$. Suppose, on the contrary, that this is not the case. We first claim:

    \begin{claim}
    \label{claim:closed_geod_length_2_times_injrad}
     There exists a closed geodesic $\gamma$ for the $g_1$-metric of length two times the injectivity radius.
     \end{claim}
     \begin{proof}[Proof of Claim]This is standard, cf. \cite[Chapter 13, Proposition 2.13]{doCarmo92} where this is proved under a positive curvature assumption, to ensure compactness of the manifold.  Indeed, let $p\in Y$ be a point that realizes the injectivity radius. We let $C(p)$ denote the cut locus of $p$. Since $Y$ is compact, $C(p)$ is also compact. Thus there exists $q\in C(p)$ such that $d(p,q)=d(p,C(p))$, which equals to the injectivity radius of $Y$. If $q$ is conjugate to $p$ then by Rauch's comparison theorem, $d(p,q)\ge \pi$; a contradiction because injectivity radius of $Y$ is less than $\pi$ by assumption. Thus $q$ is not conjugate to $p$. It follows that there exist exactly two geodesic segments $\gamma_1$ and $\gamma_2$ from $p$ to $q$ with the property that $\gamma_1'(l)=-\gamma_2'(l)$, where $l=d(p,q)$ \cite[Proposition 2.12]{doCarmo92}. On the other hand, by the choices of $p$ and $q$, $p$ is in the cut locus $C(q)$ of $q$ and $d(p,q)=d(q,C(q))$. It follows that $\gamma_1'(0)=-\gamma_2'(0)$. Then the union of $\gamma_1$ and $\gamma_2$ is a closed geodesic.
     \end{proof}

    Now we observe that the set $f(D^c)^c$ is s-convex for the metric $g_1$. Suppose, if possible, that there exist $x,y \in f(D^c)^c$ and a minimizing $g_1$-geodesic $\sigma$ between $x$ and $y$ that intersects $f(D^c)^c$ only at its endpoints. Then $\sigma-\{x,y\}$ lies in $f(D^c)$ and thus $f^*\sigma$ is a minimizing $g_0$-geodesic. As $D$ is s-convex for $g_0$, $f^*\sigma \subset D$, a contradiction. 

    Next we claim that $f(D^c)^c$ is $l$-convex. 
    Indeed, for any $p\in f(D^c)^c$, set $\varepsilon(p):=0.9 r(p)$  where $r$ is the convexity radius function. Then, for any $x,y \in f(D^c)^c \cap B_{\varepsilon(p)}(p)$, there is a unique minimizing $g_1$-geodesic segment joining $x$ and $y$ in $B_{\varepsilon(p)}(p)$ (since $B_{\varepsilon(p)}(p)$ is strongly convex, by definition of the convexity radius function). Since $f(D^c)^c$ is s-convex for $g_1$, this minimizing $g_1$-geodesic also lies in $f(D^c)^c$.

    \begin{claim}\label{claim:contained}
        If $\gamma$ is a closed geodesic as in \cref{claim:closed_geod_length_2_times_injrad}, then $\gamma$ cannot be contained entirely in $f(D^c)^c$. 
    \end{claim}
    \begin{proof}[Proof of \cref{claim:contained}]
    Suppose $\gamma$ is contained in  the l-convex set $f(D^c)^c$.  By \cite[Theorem 1.10]{Cheeger_1972}, the distance function from $\gamma$ to the boundary $\partial ( f(D^c)^c)$ is weakly convex.  Since $\gamma$ is periodic, this distance function must be constant.  Then, 
by \cite[Theorem 1.10]{Cheeger_1972}, there is an isometric copy of $\gamma \times [0,a]$ in $f(D^c)^c$ such that $\gamma \times \{0\}$ is the closed geodesic $\gamma$ itself, and $\gamma \times \{a\}$ is a closed geodesic in the metric $g_1$ that  lies in $\partial f(D^c)^c$. Since $\dim(Y)=\dim(X)$ and $f$ is isometric, $\partial f(D^c)^c$ is isometric to $\partial D$ with the standard metric. But there are no closed geodesics in $(X,g_0)$ of length shorter than $2 \pi$ (hence, also not in $\bar {f(D^c)}$). Thus $\partial f(D^c)^c$ does not contain any closed geodesic of length shorter than $2\pi$. Thus, $\gamma$ is not contained entirely in $f(D^c)^c$.
    \end{proof}

    Let $\gamma$ be a closed geodesic as in \cref{claim:closed_geod_length_2_times_injrad}. Since $\gamma$ has length less than $2\pi$ by assumption, it cannot be entirely contained in $f(D^c)$ since $f(D^c)$ is isometric to $(X\setminus D,g_0)$. By \cref{claim:contained}, $\gamma$ is also not entirely contained in $f(D^c)^c$. Thus, since $\dim(Y)=\dim(X)$, we can pick a point $p\in \partial f(D^c)^c\cap \gamma$ and also choose a parametrization of $\gamma$ such that $\gamma$ lies in $f(D^c)$ after $p$ and in $f(D^c)^c$ before $p$.   As $\gamma$ is a closed geodesic, it must re-enter $f(D^c)^c$ and we let $q_1$ be the first point after $p$ where $\gamma$ re-enters $f(D^c)^c$. Moreover, let $p_1:=p$ and $\gamma_1$ be the part of $\gamma$ between $p_1$ and $q_1$. 
    
    \begin{claim}
        \label{claim:length_gamma1_greater_than_pi}
        The length of $\gamma_1$ is strictly greater than $\pi$.
    \end{claim}
    \begin{proof}
        We claim that there exists a totally geodesic sphere $S$ of curvature $1$ in $(X,g_0)$ such that $\gamma_1$ is contained in $f(S\setminus D)$. Indeed, let $v=Df^{-1}(\gamma'(p))$, and let $S$ be a totally geodesic sphere of curvature $1$ in $(X,g_0)$ tangent to $v$. Since $f$ is isometric, the geodesic $\gamma_1$, between $p_1$ and $q_1$, is contained in $f(S\setminus D)$. 
    
        Then $f^{-1}(\gamma_1)$ is a geodesic in the 2-sphere $S$ of curvature $1$ and $f^{-1}(\gamma_1) \subset S\setminus D$. By \cref{prop:hemisphere1}, $D\cap S$ is a closed subset of an open hemisphere. It follows that
     the length of the arc $f^{-1}(\gamma_1)$ must be strictly greater than $\pi$. Since $f$ is isometric, the length of $\gamma_1$ is also strictly greater than $\pi$.
    \end{proof}

    We will show that there is a smooth variation $\{\gamma_s\}$ of $\gamma$ through closed curves, not necessarily geodesic, so that each curve in the variation has $g_1$-length shorter than $\gamma$. Indeed, there is a smooth variation $\{V_s\}$ of $f^{-1}(\gamma_1)$ inside $S\setminus D$ such that every curve in $\{V_s\}$ has shorter length than $f^{-1}(\gamma_1)$ (Here, $S$ is a totally geodesic sphere in $X$ of curvature $1$, as constructed in the proof of \cref{claim:length_gamma1_greater_than_pi}). Pushing this variation forward by $f$ we obtain a variation $\{W_s\}$ of $\gamma_1$ that has shorter length than $\gamma_1$. Let $\gamma_2=\gamma \setminus \gamma_1$. Then $W_s\cup \gamma_2$ is a piecewise smooth variation of $\gamma$ where each curve has shorter length than $\gamma$. Smoothing this variation, we obtain a smooth variation $\{\gamma_s\}$ of $\gamma$, through closed curves, where each curve has $g_1$-length shorter than $\gamma$.

    We will now arrive at a contradiction by an argument following \cite[Chapter 13, Proposition 3.4, page 282]{doCarmo92}. Let $p_2$ and $q_2$ be two points of furthest distance apart on $\gamma$. By Rauch's comparison theorem, $q_2$ and $p_2$ are not conjugate points of each other. 
    Let $\{\gamma_s\}$ be the variation we constructed in the previous paragraph. Let $q_s$ be a point of  $\gamma_s$ at maximum distance from $p_2$. Let $p_s$ be the point on $\gamma_s$ that is at maximum distance from $q_s$. Since $d(p_s, q_s) <\frac{1}{2} \text{length}(\gamma)=\text{injrad}(Y)$,  there exists a unique minimizing geodesic segment $\alpha_s$  joining $p_s$ and $q_s$. We know that $p_s \to p_2$ and $q_s \to q_2$. We claim that $\alpha_s'(p_s)$ is orthogonal to $\gamma_s$ for all $s$. For this, pick a point $\gamma_s(t)$ close to $q_s$ and let $\sigma_{s,t}$ be the geodesic segment from $p_s$ to $\gamma_s(t)$. Since $d(p_s,q_s)$ is strictly smaller than the injectivity radius, the family $\{\sigma_{s,t}:t\}$ is a variation of $\alpha_s$. Then by first variation formula, $\alpha_s'(q_s)$ is orthogonal to $\gamma_s$ at $q_s$. Taking limit as $s \to 0$, $\alpha_s$ converges to a minimizing geodesic segment $\alpha$ joining $p_2$ and $q_2$ and is orthogonal to $\gamma$ at $q_2$.  Thus $\alpha$ and the two halves of $\gamma$ produce three pairwise distinct minimizing geodesic segments joining $p_2$ and $q_2$. Since $p_2$ and $q_2$ are not conjugate to each other, this contradicts \cite[Chapter 13, Proposition 2.12]{doCarmo92}.

    In conclusion, this contradiction shows that the injectivity radius of $(Y,g_1)$ is at least $\pi$ and finishes the proof of \cref{prop:metric_g1_is_CAT_1}.
\end{proof}

We will now finish the proof of \cref{thm:main}. This theorem is a direct corollary of \cref{thm:main-embedding}. Take $Y=X$, $f$ to be the identity, and apply \cref{thm:main-embedding}. \cref{thm:main} is then immediate.

\section{Examples}

In this section, we construct the example that proves \cref{thm:surface_example}.

\begin{example}
\label{eg:metric_on_surf_rev}

\begin{figure}
    \centering
    \includegraphics[scale=0.9]{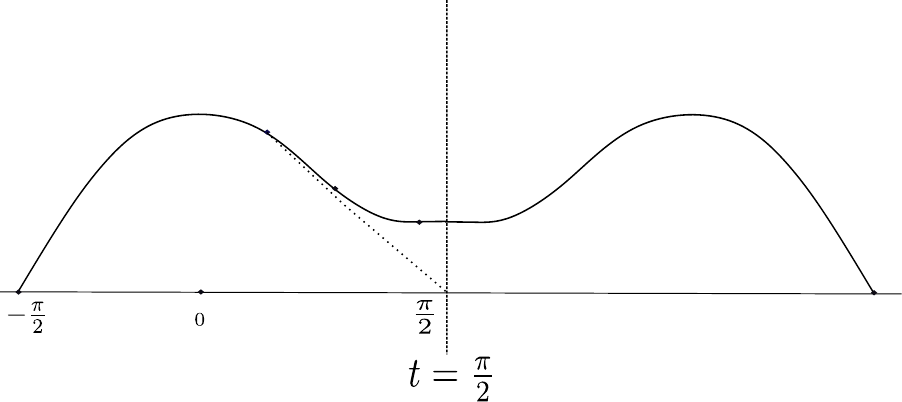}
    \caption{Illustration of the function $f(t)$ used in \cref{eg:metric_on_surf_rev}}
    \label{fig:metric_on_surf_rev}
\end{figure}
Consider a surface of revolution around the $z$-axis of a ($y-z$) planar profile curve $t\mapsto (f(t),g(t))$ parametrized by arclength (i.e. $1=f'(t)^2+g'(t)^2$), so the Gauss curvature is $-f''(t)/f(t)$ (cf. Exercise 7a on p.169 of \cite{doCarmo76}). We wish to choose a positive $f$ such that this is at most $1$. Indeed, in order to find such a function $f$, we modify the $\cos(t)$ function, which is the corresponding first coordinate parameter function for the profile curve $(\cos(t),\sin(t))$ on $[-\frac{\pi}{2},\frac{\pi}{2}]$ for the round sphere. We write $f(t)=\cos(t)+\eps(t)$ where $\eps:[-\frac{\pi}{2},\frac{\pi}{2}]\to [0,1]$ is a smooth function which is identically 0 on $[-\frac{\pi}{2},a]$ for some fixed choice of $0< a < \frac{\pi}{2}$ and satisfies certain other conditions which we will specify shortly. Note that $f'(t)=-\sin(t)+\eps'(t)$ and $f''(t)=-\cos(t)+\eps''(t)$. In order to remain the first coordinate of a unit speed parametrization we require $-1+\sin(t)\leq \eps'(t) \leq 1+\sin(t)$.

The Gauss curvature for the surface of revolution of  this general profile curve is
\[
K_f(t)=-\frac{f''(t)}{f(t)}=\frac{\cos(t)-\eps''(t)}{\cos(t)+\eps(t)}.
\]
We observe that provided $0< \cos(t)+\eps(t)$ and $-\eps''(t)\leq \eps(t)$ for $t\in [a,\frac{\pi}{2}]$, then $K_f\leq 1$. Next, observe that to satisfy both of these conditions, it suffices to choose $\eps$ to be a non-negative and weakly convex function, i.e. $\eps''(t) \geq 0$. We claim that:
\begin{claim}
\label{claim:existence_of_eps_function}
There exists a smooth function $\eps$ defined in a neighborhood of $[a,\frac{\pi}{2}]$ such that:
\begin{itemize}
    \item $0\leq \eps(t)\leq 1$,
    \item $-1+\sin(t)\leq \eps'(t) \leq 1+\sin(t)$, 
    \item $\eps''(t)>0$ for $t\in (a,\frac{\pi}{2})$,
    \item $\eps^{(k)}(a)=0$ for any $k\geq 0$, 
    \item $\eps^{(2k)}(\frac{\pi}{2})=0$, and $\eps^{(2k-1)}(\frac{\pi}{2})=(-1)^{k-1}$ for any for $k\geq 1$ (that is, $\eps^{(k)}(\frac{\pi}{2})$ matches $(-\cos)^{(k)}(t)$ at $t=\frac{\pi}{2}$ for any $k\geq 1$).
\end{itemize}
\end{claim}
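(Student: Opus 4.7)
The plan is to construct $\eps$ on $[a,\pi/2]$ by first specifying its second derivative $h := \eps''$, then integrating twice with initial conditions $\eps(a) = \eps'(a) = 0$, and extending by zero on $[-\pi/2, a]$. Because $\eps^{(k)} = h^{(k-2)}$ for $k \ge 2$, and because $(-\cos)^{(k)}(t) = \cos^{(k-2)}(t)$ for $k \ge 2$, the Taylor-type conditions at the two endpoints translate cleanly into four conditions on $h$: (i) $h$ is flat at $a$, i.e.\ $h^{(j)}(a)=0$ for all $j\ge 0$; (ii) $h^{(j)}(\pi/2) = \cos^{(j)}(\pi/2)$ for all $j\ge 0$; (iii) $h(t) > 0$ on $(a,\pi/2)$ (this is the strict convexity of $\eps$); and (iv) $\int_a^{\pi/2} h(s)\,ds = 1$, which is exactly $\eps'(\pi/2) = 1$.

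To produce such an $h$ I would write $h = h_1 + h_2$. For $h_1$, choose a smooth cutoff $\chi$ that is flat zero at $a$, identically $1$ on a one-sided neighborhood $[\pi/2-\delta,\pi/2]$, and satisfies $0<\chi<1$ on $(a,\pi/2-\delta)$; set $h_1(t) := \chi(t)\cos(t)$. Then $h_1$ is flat at $a$, coincides with $\cos$ near $\pi/2$ (so (ii) is automatic), and is strictly positive on $(a,\pi/2)$ because $\cos > 0$ there. Since $a > 0$, the integral $V := \int_a^{\pi/2} h_1(s)\,ds$ lies in $(0, 1-\sin a)\subset (0,1)$. For $h_2$, pick a smooth nonnegative bump $\phi$ compactly supported in $(a,\pi/2-\delta)$, flat at both endpoints of its support, strictly positive in the interior, with $\int\phi = 1$, and define $h_2 := (1-V)\phi$. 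Then $h := h_1 + h_2$ satisfies (i)--(iv), so the extended function $\eps(t) := \int_a^t (t-s)h(s)\,ds$ is $C^\infty$ on a neighborhood of $[a,\pi/2]$ with the required derivative data at both endpoints; gluing with $\eps \equiv 0$ on $[-\pi/2,a]$ remains smooth thanks to (i).

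It remains to verify the pointwise bounds. Since $h \ge 0$ and $\eps(a) = \eps'(a) = 0$, both $\eps' \ge 0$ and $\eps \ge 0$ on $[a,\pi/2]$, giving the lower bounds (using $\sin t \le 1$ for the lower bound on $\eps'$). The upper bound on $\eps'$ is immediate: $\eps'$ is nondecreasing with $\eps'(\pi/2)=1$, so $\eps' \le 1 \le 1 + \sin t$. The only nontrivial estimate is $\eps(\pi/2) \le 1$; here the crude bound
\[
\eps(\pi/2) \;=\; \int_a^{\pi/2}(\pi/2-s)h(s)\,ds \;\le\; (\pi/2 - a)\int_a^{\pi/2} h(s)\,ds \;=\; \pi/2 - a
\]
suffices provided we fix $a \ge \pi/2 - 1$ from the start (a sharper bound is available by concentrating the bump $\phi$ near $\pi/2-\delta$ if $a$ needs to be smaller). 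The freedom in choosing $a$ in $(0,\pi/2)$ is exactly what the statement allows.

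The only genuine obstacle is arranging simultaneously that $h$ vanish flat at $a$, match all derivatives of $\cos$ at $\pi/2$, remain strictly positive on the open interval, and integrate to exactly $1$; the two-piece decomposition $h = h_1 + h_2$ decouples the derivative matching at $\pi/2$ (handled by $h_1$) from the integral correction (handled by $h_2$) and resolves this cleanly.
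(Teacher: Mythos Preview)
Your proof is correct and takes a genuinely different route from the paper's. The paper builds a piecewise $C^2$ function $\eps_0$ directly---an explicit quartic polynomial on a middle interval, glued to $0$ on the left and to $c-\cos t$ on the right---and then mollifies by convolving with a specially normalized even bump (the normalization $\int\sin(\tfrac{\pi}{2}-y)\phi(y)\,dy=1$ is chosen precisely so that the odd-order derivatives at $\pi/2$ come out right after convolution). Your approach instead works at the level of $h=\eps''$: you write $h=\chi\cos + (1-V)\phi$, so that the cutoff piece handles all the jet-matching at $\pi/2$ (since $h\equiv\cos$ there) while the bump piece corrects the single integral constraint $\int h=1$, and then you recover $\eps$ by two integrations. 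This decoupling is cleaner and avoids any explicit polynomial or convolution calculation; the paper's route is more hands-on and yields the slightly sharper bound $0\le\eps'\le\sin t$ as a byproduct. Your restriction $a\ge\tfrac{\pi}{2}-1$ for the crude estimate $\eps(\tfrac{\pi}{2})\le\tfrac{\pi}{2}-a$ is harmless: your parenthetical remark about concentrating $\phi$ near $\tfrac{\pi}{2}-\delta$ (and, if needed, making $\chi$ small on most of $(a,\tfrac{\pi}{2}-\delta)$) does handle arbitrary $a\in(0,\tfrac{\pi}{2})$, and in any case $a$ is a free parameter in the example, as you note.
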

 Indeed, to prove this claim, we want a non-negative and weakly convex function $\eps(t)$ that vanishes to all orders at $t=a$ and matches $-\cos(t)$ to all $k$-th order derivatives for $k\geq 1$ at $t=\frac{\pi}{2}$. That this can be done is a consequence of smoothing theory. For the sake of completeness, we will explain the details of this below in the part \textbf{Construction of $\eps(t)$}. 

Now, we have the function $\eps(t)$, and hence $f(t)$, on $[0,\frac{\pi}{2}]$.  We reflect $f(t)$ across the line $t=\frac{\pi}{2}$ in the plane where we graph $(t,f(t))$; see \cref{fig:metric_on_surf_rev}. This finishes the construction of our \cref{eg:metric_on_surf_rev}. The reader should note that the graph in \cref{fig:metric_on_surf_rev} is not the curve being revolved; rather we revolve the curve $t\mapsto (f(t),g(t))$ where $g$ is determined by the unit speed criterion $1=(f')^2+(g')^2$.

\medskip

\noindent \textbf{Construction of $\eps(t)$:}
\label{sec:construct_eps_t}
For the sake of simplicity, we will explain the construction of $\eps$ in a neighborhood of $[0,\frac{\pi}{2}]$ instead of $[a,\frac{\pi}{2}]$. Indeed, by an appropriate linear re-scaling, we can change the point $t=0$ to $t=a$ so that $\eps$ is defined on $[a,\frac{\pi}{2}]$ with all the desired properties remaining unchanged. Moreover, we will also not normalize $\eps(t)$ to take values $[0,1]$; that can be arranged by simply rescaling $\eps(t)$ appropriately. So, to reiterate, we will now construct a function $\eps$ defined in a neighborhood of $[0,\frac{\pi}{2}]$ that satisfies \cref{claim:existence_of_eps_function} with $a=0$.

First, we find a $C^2$ approximation $\eps_0$ of our desired function. To do this, we will define $\eps_0$ by hand on $[\delta, \frac{\pi}{2}-\delta]$ below, see \cref{eqn:defn_eps_0}. Then, we will extend $\eps_0$ 
by the constant $0$ on $[-\delta,\delta]$ and by $(c-\cos t)$ on $[\frac{\pi}{2}-\delta,\frac{\pi}{2}+\delta]$ where $c:=\eps_0(\frac{\pi}{2}-\delta)+\cos(\frac{\pi}{2}-\delta)$. Here we will choose $\delta>0$ to be sufficiently small. It is clear that $\eps_0$ is is convex on $[0,\delta]\cup [\frac{\pi}{2}-\delta,\frac{\pi}{2}]$. So, now we will discuss about $\eps_0$ on the interval $[\delta, \frac{\pi}{2}-\delta]$. On this interval, we take $\eps_0$ to be the following polynomial:  

\begin{align}
\label{eqn:defn_eps_0}
\eps_0(t)=(t-\delta)^3 \frac{12 (\pi-3\delta-t) \cos (\delta)-(\pi -4 \delta) (2\pi-5 \delta-3 t) \sin (\delta)}{3 (\pi -4 \delta)^3}.
\end{align}
It is easy to observe $\eps_0(t) \geq 0$ for $t\in [\delta, \frac{\pi}{2}-\delta ]$, whenever $0<\delta<\frac{\pi}{4}$. The convexity of $\eps_0$ on this interval follows from the explicit computation of the second derivative,
\[
\eps_0''(t)=(t-\delta)\frac{24 (\pi-2 \delta-2 t) \cos (\delta)-4(\pi -4 \delta) (\pi-\delta-3 t) \sin (\delta)}{(\pi -4 \delta)^3}.
\]
Indeed, when $\delta>0$ is sufficiently small, the first term in the sum is almost $48(\frac{\pi}{2}-\delta-t)>0$ while the second term is very close to $0$.

Now, to construct our smooth $\eps(t)$, we convolve $\eps_0$ with a smooth positive bump function $\phi$ whose choice we now explain. Let $\phi_0$ be a smooth bump function supported on $[-\delta,\delta]$ which is even and has $\int_{-\delta}^\delta \phi_0(x)dx=1$. We will now choose a specific normalization of $\phi_0$. For that, let $\alpha_0:=\left( \int_{-\delta}^{\delta}\sin(\frac{\pi}{2}-y)\phi_0(y)dy\right)^{-1}$ and set $\phi:=\alpha_0 \phi_0$. We will work with this normalized bump function. Note that $$\int_{-\delta}^{\delta}\sin(\frac{\pi}{2}-y)\phi(y)dy=1, \text{ while } \int_{-\delta}^{\delta}\cos(\frac{\pi}{2}-y)\phi(y)dy=0.$$ Indeed, the first equality is due to our choice of normalization for $\phi$ while the second equality is a consequence of the fact that $\cos(t)$ is an odd function on $[\frac{\pi}{2}-\delta,\frac{\pi}{2}+\delta]$ and $\phi$ is an even function. 

Now we define $\eps$ by convolving $\eps_0$ with $\phi$,  
\[
\eps(t):=\eps_0 * \phi(t)=\int_{-\infty}^{\infty} \eps_0(y)\phi(t-y)dy=\int_{-\delta}^{\delta} \eps_0(t-y)\phi(y)dy.
\]
Clearly $\eps(t)$ is non-negative and the second expression implies that $\eps''(t)$ remains non-negative as it holds for $\eps_0$. Further, $\eps$ vanishes to all orders at $t=0$ since $\eps_0$ is identically 0 on $[-\delta,\delta]$. Next we observe that all even-order derivatives $\eps^{(2k)}(\frac{\pi}{2})=0$ for $k\geq 1$. Indeed, 
\[
\eps^{(2k)}\left(\frac{\pi}{2}\right)=(-1)^{k+1}\int_{-\delta}^{\delta}\cos(\frac{\pi}{2}-y)\phi(y)dy=0,
\]
for any $k\geq 1$. For the odd-order derivatives, we have 
\[
\eps^{(2k-1)}\left(\frac{\pi}{2}\right)=(-1)^{k-1}\int_{-\delta}^{\delta}\sin(\frac{\pi}{2}-y)\phi(y)dy=(-1)^{k-1},
\]
for $k\geq 1$. So $\eps^{(2k-1)}(\frac{\pi}{2})=(-1)^{k-1}$ which agrees with the odd  order derivatives of $-\cos(t)$ at $t=\frac{\pi}{2}$, as desired. 

\begin{figure}
    \centering
    \includegraphics[scale=0.28]{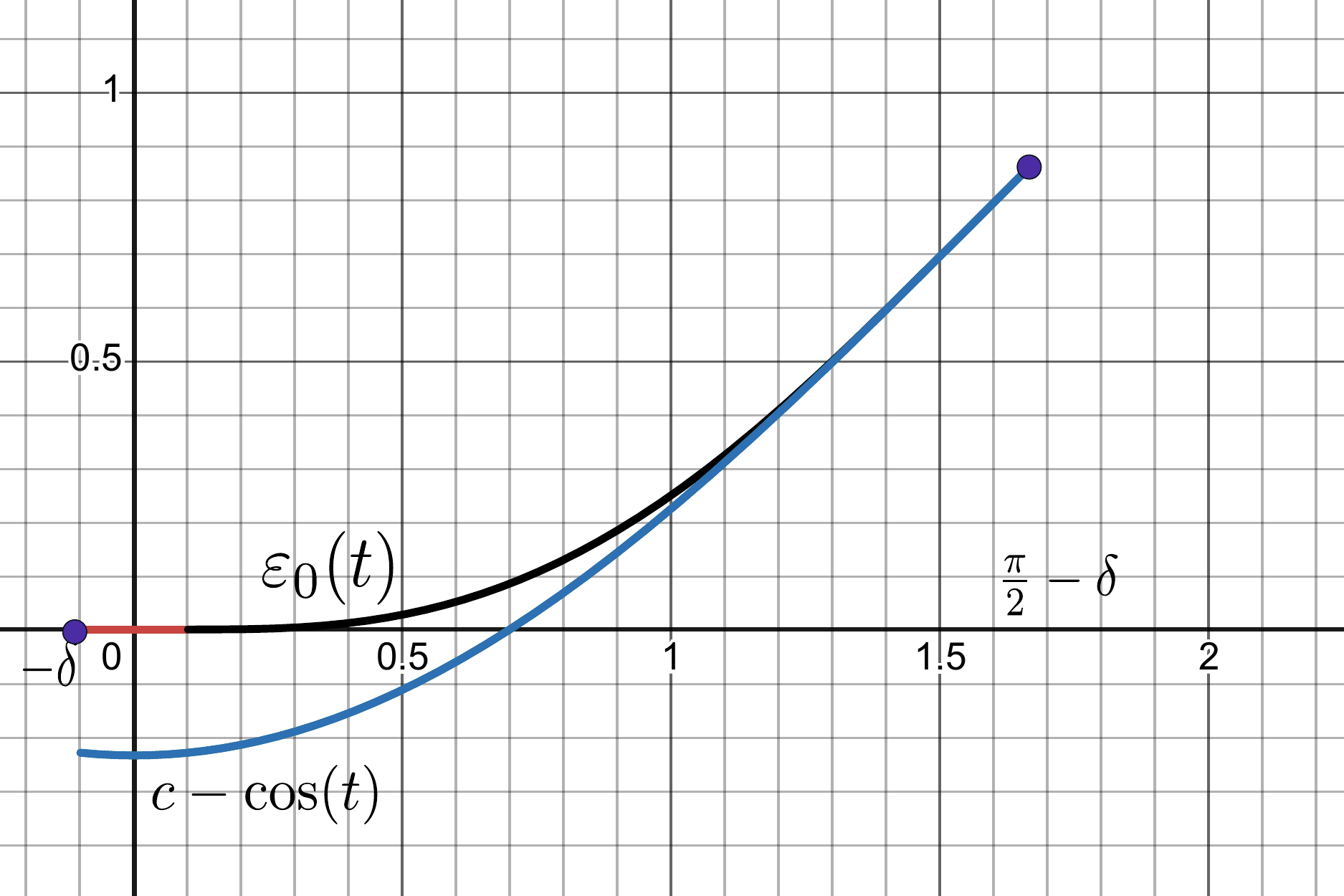}
    \caption{A sample graph of the function $\varepsilon_0(t)$, drawn by choosing $\delta=0.1$. Compare it with $c-\cos(t)$ where $c=\varepsilon_0(\frac{\pi}{2}-\delta)+\cos(\frac{\pi}{2}-\delta)$}
    \label{fig:graph_of_eps0}
\end{figure}

Now the last thing to check is that $-1 \leq \varepsilon'(t)-\sin(t) \leq 1$. In fact, we will show a stronger inequality that $0\leq \varepsilon'(t)\leq \sin(t)$. The lower bound is obvious as $\eps''(t) \geq 0$ and $\eps'(0)=0$.  
Now note that $0\leq \varepsilon_0'(t) \leq \sin (t)$. Indeed, one can compute the derivative of $\eps_0'(t)$ from \cref{eqn:defn_eps_0} and check this by explicit computation. But it is more instructive to simply graph the function $\varepsilon_0(t)$ and note that, by construction, the slope of $\eps_0(t)$ increases faster than the slope of $c-\cos(t)$;  see Figure \ref{fig:graph_of_eps0}. Then,
\begin{align*}
    \varepsilon'(t)&=\int_{-\delta}^{\delta} \eps_0'(t-y)\phi(y)dy \\
    &\leq \int_{-\delta}^{\delta}\sin(t-y)\phi(y)dy =  \frac{1}{2}\left( \int_{-\delta}^{\delta}\sin(t-y)\phi(y)dy + \int_{-\delta}^{\delta}\sin(t+y)\phi(y)dy\right)\\
    & = \int_{-\delta}^{\delta}\sin(t) \cos(y)\phi(y)dy=\sin(t) \int_{-\delta}^{\delta}\sin(\frac{\pi}{2}-y)\phi(y)dy=\sin(t).
\end{align*}

Thus $\eps(t)$ has all our desired properties. 
\end{example}

\begin{remark} \label{rem:minmal} We cannot directly extend the construction of this example to higher dimensions since the symmetry of this construction implies the existence of a minimal $2$-sphere of smaller area than $4\pi$ at the narrowest point of the ``barbell.'' This would contradict the upper curvature condition as we point out in the next paragraph. However, it is unclear to us if a more general construction similar to \cref{eg:metric_on_surf_rev} can be done in higher dimensions.

Note that for any metric on $X$ with a minimal $2$ sphere $S$, the Gauss equations give $K_X(x)=K_S(x)-\la_1^N(x)\la_2^N(x)$ at points of $x\in S$, where $K_X$ is the intrinsic curvature of $X$, $K_S$ is the intrinsic curvature of $S$, and $\la_1^N(x)$ and $\la_2^N(x)$ are the eigenvalues of the second fundamental form of $S$ for any normal direction $N\in (T_xS)^\perp$. By minimality $\la_1^N(x)+\la_2^N(x)=0$ for each $x\in S$ and $N\in (T_xS)^\perp$. Moreover, the Gauss-Bonnet theorem implies $K_S(x)\geq \frac{4\pi}{Area(S)}$ at some point $x\in S$ so there exists some point where $K_X(x)\geq \frac{4\pi}{Area(S)}$. In other words, as soon as there is a minimal 2-sphere smaller in area than an equatorial totally geodesic 2-sphere then $K_X(x)>1$.
\end{remark}

\end{document}